\providecommand{\U}[1]{\protect\rule{.1in}{.1in}}
\theoremstyle{definition}
\newtheorem{theo}{Theorem}[section]
\newenvironment{theorem}[1][]
{\begin{theo}[#1]\begin{leftbar}}
{\end{leftbar}\end{theo}}
\newtheorem{lem}[theo]{Lemma}
\newenvironment{lemma}[1][]
{\begin{lem}[#1]\begin{leftbar}}
{\end{leftbar}\end{lem}}
\newtheorem{prop}[theo]{Proposition}
\newenvironment{proposition}[1][]
{\begin{prop}[#1]\begin{leftbar}}
{\end{leftbar}\end{prop}}
\newtheorem{defi}[theo]{Definition}
\newenvironment{definition}[1][]
{\begin{defi}[#1]\begin{leftbar}}
{\end{leftbar}\end{defi}}
\newtheorem{remk}[theo]{Remark}
\newtheorem{coro}[theo]{Corollary}
\newtheorem{conv}[theo]{Convention}
\newenvironment{convention}[1][]
{\begin{conv}[#1]\begin{leftbar}}
{\end{leftbar}\end{conv}}
\newtheorem{quest}[theo]{Question}
\newenvironment{question}[1][]
{\begin{quest}[#1]\begin{leftbar}}
{\end{leftbar}\end{quest}}
\newtheorem{warn}[theo]{Warning}
\newenvironment{warning}[1][]
{\begin{warn}[#1]\begin{leftbar}}
{\end{leftbar}\end{warn}}
\newtheorem{conj}[theo]{Conjecture}
\newtheorem{exam}[theo]{Example}
\newenvironment{example}[1][]
{\begin{exam}[#1]\begin{leftbar}}
{\end{leftbar}\end{exam}}
\newenvironment{statement}{\begin{quote}}{\end{quote}}
\newenvironment{proof}[1][Proof]{\noindent\textbf{#1.} }{\ \rule{0.5em}{0.5em}}
\let\sumnonlimits\sum
\let\prodnonlimits\prod
\let\cupnonlimits\bigcup
\let\capnonlimits\bigcap
\renewcommand{\sum}{\sumnonlimits\limits}
\renewcommand{\prod}{\prodnonlimits\limits}
\renewcommand{\bigcup}{\cupnonlimits\limits}
\renewcommand{\bigcap}{\capnonlimits\limits}
\newenvironment{noncompile}{}{}
\definecolor{darkgreen}{rgb}{0,.5,0}
\newtheoremstyle{plainsl}
{8pt plus 2pt minus 4pt}
{8pt plus 2pt minus 4pt}
{\slshape}
{0pt}
{\bfseries}
{.}
{5pt plus 1pt minus 1pt}
{}
\theoremstyle{plainsl}
\begin{document}

\title{An equality for balanced digraphs}
\author{Darij Grinberg\thanks{Drexel University, Philadelphia, PA.
\href{mailto:darijgrinberg@gmail.com}{\texttt{darijgrinberg@gmail.com}}},
Benjamin Liber\thanks{Drexel University, Philadelphia, PA.
\href{mailto:bl839@drexel.edu}{\texttt{bl839@drexel.edu}}}}
\date{June 1, 2026}
\maketitle

\begin{abstract}
Consider a directed multigraph $D$ that is balanced (i.e., at each vertex, the
indegree equals the outdegree). Let $A$ be its set of arcs. Fix an integer
$k$. Let $s$ be a vertex of $D$. We show that the number of $k$-element
subsets $B$ of $A$ that contain no cycles but contain a path from each vertex
to $s$ (we call them \textquotedblleft$s$-convergences\textquotedblright) is
independent of $s$. This generalizes known facts about spanning arborescences,
acyclic orientations and maximal acyclic subdigraphs (or, equivalently,
minimum feedback arc sets). Moreover, this result can be generalized even
further, replacing \textquotedblleft contain no cycles\textquotedblright\ with
\textquotedblleft have a given set of cycles\textquotedblright.

\end{abstract}

\section{The theorems}

In this note, we shall discuss \emph{balanced multidigraphs} -- i.e., directed
multigraphs (allowing loops and multiple arcs) in which each vertex satisfies
\textquotedblleft outdegree = indegree\textquotedblright. We recall the
relevant definitions in more detail:

A \emph{multidigraph} (henceforth just \emph{digraph}) means a triple
$(V,A,\psi)$, where $V$ and $A$ are two finite sets and $\psi:A\rightarrow
V\times V$ is a map. The elements of $V$ are called the \emph{vertices} of
this digraph, and the elements of $A$ are called the \emph{arcs} of this
digraph. The \emph{source} and \emph{target} of an arc $a\in A$ are,
respectively, the first and second entries of the pair $\psi(a)$. The
\emph{indegree} $\deg^{-}v$ of a vertex $v\in V$ means the number of arcs
$a\in A$ whose target is $v$. The \emph{outdegree} $\deg^{+}v$ of a vertex
$v\in V$ means the number of arcs $a\in A$ whose source is $v$. We say that a
digraph $(V,A,\psi)$ is \emph{balanced} if and only if each vertex $v\in V$
satisfies $\deg^{+}v=\deg^{-}v$. For further terminology on digraphs, we refer
to \cite{22s}.\footnote{The famous directed Euler--Hierholzer theorem (which
will not be used in this note) says that a weakly connected digraph has an
Eulerian circuit if and only if it is balanced. Thus, weakly connected
balanced digraphs are also known as \emph{Eulerian digraphs}.}

A \emph{to-root} of a digraph $D$ means a vertex $s$ of $D$ such that for each
vertex $v$ of $D$, the digraph $D$ has a path from $v$ to $s$ (equivalently, a
walk from $v$ to $s$).

From now on, we \textbf{fix a balanced digraph} $D=(V,A,\psi)$. If $B$ is any
subset of $A$, then $D\left\langle B\right\rangle $ will denote the induced
subdigraph $\left(  V,B,\psi\mid_{B}\right)  $. A subset $B$ of $A$ will be
called \emph{acyclic} if the subdigraph $D\left\langle B\right\rangle $ has no
cycles. (\textquotedblleft Cycle\textquotedblright\ always means
\textquotedblleft directed cycle\textquotedblright, as in \cite[Definition
4.5.1]{22s}.)

Given a vertex $s$ of $D$, we define an \emph{$s$-convergence} to be an
acyclic subset $B$ of $A$ such that $s$ is a to-root of the subdigraph $D
\left\langle B \right\rangle $.

We can think of an $s$-convergence as a set $B$ of arcs of $D$ such that by
following the $B$-arcs (i.e. the arcs in $B$) from any vertex $v\in V$, we
will always arrive at $s$ (no matter which $B$-arcs we take), and we will be
stuck at $s$. (This intuition is formalized in Proposition \ref{prop.s-conv.2}.)

For any $k\in\mathbb{N}$ and $s\in V$, we let $\gamma_{k}\left(  s\right)  $
denote the number of $s$-convergences of size $k$ (that is, with $k$
arcs).\footnote{The symbol $\mathbb{N}$ denotes the set $\left\{
0,1,2,\ldots\right\}  $.}

Our first main result is the following:

\begin{theorem}
\label{thm.balgamma}Let $k\in\mathbb{N}$. Then, the number $\gamma_{k}\left(
s\right)  $ does not depend on $s$. That is, $\gamma_{k}\left(  s\right)
=\gamma_{k}\left(  t\right)  $ for any $s,t\in V$.
\end{theorem}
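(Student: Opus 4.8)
The plan is to reduce to a convenient configuration and then build an explicit cardinality-preserving bijection. First the reductions. If $D$ is not weakly connected, there are no $s$-convergences for any $s$ (a path from every vertex to $s$ forces weak connectedness), so every $\gamma_{k}\left(s\right)$ vanishes and there is nothing to prove; assume henceforth that $D$ is weakly connected. Then $D$ is strongly connected: otherwise its condensation into strong components would be a weakly connected acyclic digraph with at least two vertices, hence would contain a component $X$ with an arc leaving it but no arc entering it, contradicting $\sum_{v\in X}\deg^{+}v=\sum_{v\in X}\deg^{-}v$. Therefore, for any $s,t\in V$ there is a directed path $s=v_{0}\to v_{1}\to\cdots\to v_{\ell}=t$ in $D$, and it suffices to prove $\gamma_{k}\left(v_{i}\right)=\gamma_{k}\left(v_{i+1}\right)$ for each $i$. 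So we may assume that $D$ has an arc $e$ with source $s$ and target $t$.

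Now the main step: construct a bijection $\Phi$ from the set of $s$-convergences of size $k$ to the set of $t$-convergences of size $k$, built from the arc $e\colon s\to t$. Let $B$ be an $s$-convergence. Since $s$ is the unique sink of $D\left\langle B\right\rangle$ (Proposition \ref{prop.s-conv.2}), we have $e\notin B$, yet $B$ contains a path from $t$ to $s$. The idea is to transport the sink from $s$ to $t$: delete a suitable out-arc of $t$ from $B$, insert $e$, and reroute those arcs of $B$ that were carried through $t$, so that the resulting set is again acyclic with a single sink --- now $t$ --- and of the same size $k$. The delicate point is \emph{which} out-arc of $t$ to delete and \emph{how} to reroute. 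The naive recipe --- delete an arbitrary out-arc of $t$ and add $e$ --- is not injective, and no recipe that ignores balancedness can succeed, because $\gamma_{\abs{V}-1}\left(s\right)=\gamma_{\abs{V}-1}\left(t\right)$ already fails for non-balanced digraphs (the directed path $1\to2\to3$ has a spanning in-arborescence to $3$ but none to $1$). So balancedness has to drive the surgery; the natural tool is an arc-disjoint decomposition of $A$ into directed cycles, or an Eulerian circuit of $D$ (both available exactly because $D$ is balanced), which I would use to pin down canonically the out-arc of $t$ removed and the rerouting, so that the construction is reversible.

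The crux, and where I expect the real difficulty, is precisely this construction of $\Phi$ together with the verification that it is well-defined, size-preserving, valued in $t$-convergences, and invertible --- with balancedness doing the essential work. If the bijection proves elusive, I see two fallbacks: (i) for $k=\abs{V}-1$ the assertion is the Matrix--Tree / BEST theorem for balanced digraphs (all cofactors of the Laplacian coincide because balancedness annihilates its row and column sums), which suggests looking for a $q$-deformed Laplacian whose appropriate minor equals $\sum_{k}\gamma_{k}\left(s\right)q^{k}$ and whose minors are root-independent for the same reason; (ii) induct on $\abs{A}$, deleting $e$ (contributing the count for the mildly unbalanced digraph $D-e$) and contracting $e$ (contributing the count for the balanced digraph $D/e$, which has fewer arcs, so induction applies), then checking that the two contributions reassemble into something independent of the root. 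Finally, a consistency check on the statement: $k=\abs{V}-1$ recovers spanning in-arborescences, the largest $k$ with $\gamma_{k}\neq0$ on a bidirected graph recovers acyclic orientations with a prescribed unique sink (Greene--Zaslavsky), and a suitable $k$ recovers maximal acyclic subdigraphs / minimum feedback arc sets.
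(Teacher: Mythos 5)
Your reductions are fine: a non--weakly-connected $D$ has no convergences at all, a weakly connected balanced digraph is strongly connected (the condensation argument is correct), and it then suffices to treat $s,t$ joined by an arc $e$. The observations that $e\notin B$ for an $s$-convergence $B$ and that $B$ contains a path from $t$ to $s$ are also correct. But the proof stops exactly where the theorem begins: the bijection $\Phi$ is never constructed. You yourself flag that the naive ``delete an out-arc of $t$, insert $e$'' recipe is not injective, that some unspecified ``rerouting'' is needed, and that an Eulerian circuit or cycle decomposition ``would'' be used to make the surgery canonical and reversible --- but no rule is given, no verification that the output is acyclic with unique sink $t$, and no inverse. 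This is not a detail to be filled in mechanically: the paper's own closing remarks indicate that a bijection between $\Gamma_k(s)$ and $\Gamma_k(t)$ is only obtained by iterating a Garsia--Milne-type involution up to $\lvert V\rvert$ times, so a one-shot local surgery of the kind you sketch is unlikely to exist in the stated form. The two fallbacks (a $q$-deformed Matrix--Tree identity; deletion--contraction on $e$) are likewise only named, not executed, and each has its own unresolved obstruction (no such deformation is exhibited; $D-e$ is no longer balanced, so the inductive hypothesis does not apply to it).

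For contrast, the paper never constructs a direct bijection between $s$-convergences and $t$-convergences. It embeds both counts into the common set $U_k$ of acyclic $k$-subsets from which every vertex reaches $s$ or $t$, writes $\gamma_k(s)$ and $\gamma_k(t)$ as $\lvert U_k\rvert$ minus sums indexed by ordered partitions $V=P\sqcup Q$, and matches the summands by showing both equal $\sum_m\binom{\lvert A(P,Q)\rvert}{m}\lvert X^{P,Q}_{k-m}\rvert$; balancedness enters only through the cut symmetry $\lvert A(P,Q)\rvert=\lvert A(Q,P)\rvert$. If you want to salvage your approach, that subtractive route (or the iterated involution it induces) is the missing mechanism; as written, your argument establishes the easy reductions but not the theorem.
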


\begin{example}
\label{exa.1}Let $D$ be the following balanced multidigraph:%
\[%
%TCIMACRO{\TeXButton{tikz multidigraph}{\begin{tikzpicture}[scale=4]
%\begin{scope}[every node/.style={circle,thick,draw=green!60!black}]
%\node(1) at (0,0) {$1$};
%\node(2) at (0,1) {$2$};
%\node(3) at (1,1) {$3$};
%\node(4) at (1,0) {$4$};
%\end{scope}
%\begin{scope}[every edge/.style={draw=black,very thick}, every loop/.style={}]
%\path[->] (1) edge[bend left=20] node[left] {$a$} (2);
%\path[->] (2) edge[bend left=20] node[right] {$b$} (1);
%\path[->] (2) edge node[above] {$c$} (3);
%\path[->] (3) edge node[right] {$d$} (4);
%\path[->] (4) edge node[above] {$e$} (2);
%\path[->] (1) edge[bend left=20] node[above] {$f$} (4);
%\path[->] (4) edge[bend left=20] node[below] {$g$} (1);
%\end{scope}
%\end{tikzpicture}}}%
%BeginExpansion
\begin{tikzpicture}[scale=4]
\begin{scope}[every node/.style={circle,thick,draw=green!60!black}]
\node(1) at (0,0) {$1$};
\node(2) at (0,1) {$2$};
\node(3) at (1,1) {$3$};
\node(4) at (1,0) {$4$};
\end{scope}
\begin{scope}[every edge/.style={draw=black,very thick}, every loop/.style={}]
\path[->] (1) edge[bend left=20] node[left] {$a$} (2);
\path[->] (2) edge[bend left=20] node[right] {$b$} (1);
\path[->] (2) edge node[above] {$c$} (3);
\path[->] (3) edge node[right] {$d$} (4);
\path[->] (4) edge node[above] {$e$} (2);
\path[->] (1) edge[bend left=20] node[above] {$f$} (4);
\path[->] (4) edge[bend left=20] node[below] {$g$} (1);
\end{scope}
\end{tikzpicture}%
%EndExpansion
\]
Then, the $1$-convergences are the subsets%
\[
\left\{  b,d,g\right\}  ,\ \ \ \ \ \ \ \ \ \ \left\{  b,d,e\right\}
,\ \ \ \ \ \ \ \ \ \ \left\{  c,d,g\right\}  ,\ \ \ \ \ \ \ \ \ \ \left\{
b,d,e,g\right\}  ,\ \ \ \ \ \ \ \ \ \ \left\{  b,c,d,g\right\}  .
\]
Hence, $\gamma_{3}\left(  1\right)  =3$, $\gamma_{4}\left(  1\right)  =2$, and
$\gamma_{k}\left(  1\right)  =0$ for all $k\notin\left\{  3,4\right\}  $. As a
visual aid, below is the spanning subdigraph $D\left\langle B\right\rangle $
for $B=\left\{  b,d,e,g\right\}  $:
\[%
%TCIMACRO{\TeXButton{tikz multidigraph}{\begin{tikzpicture}[scale=4]
%\begin{scope}[every node/.style={circle,thick,draw=green!60!black}]
%\node(1) at (0,0) {$1$};
%\node(2) at (0,1) {$2$};
%\node(3) at (1,1) {$3$};
%\node(4) at (1,0) {$4$};
%\end{scope}
%\begin{scope}[every edge/.style={draw=black,very thick}, every loop/.style={}]
%\path[->] (2) edge[bend left=20] node[right] {$b$} (1);
%\path[->] (3) edge node[right] {$d$} (4);
%\path[->] (4) edge node[above] {$e$} (2);
%\path[->] (4) edge[bend left=20] node[below] {$g$} (1);
%\end{scope}
%\end{tikzpicture}}}%
%BeginExpansion
\begin{tikzpicture}[scale=4]
\begin{scope}[every node/.style={circle,thick,draw=green!60!black}]
\node(1) at (0,0) {$1$};
\node(2) at (0,1) {$2$};
\node(3) at (1,1) {$3$};
\node(4) at (1,0) {$4$};
\end{scope}
\begin{scope}[every edge/.style={draw=black,very thick}, every loop/.style={}]
\path[->] (2) edge[bend left=20] node[right] {$b$} (1);
\path[->] (3) edge node[right] {$d$} (4);
\path[->] (4) edge node[above] {$e$} (2);
\path[->] (4) edge[bend left=20] node[below] {$g$} (1);
\end{scope}
\end{tikzpicture}%
%EndExpansion
\]

\noindent Theorem \ref{thm.balgamma} says that for all $v\in\{1,2,3,4\}$, we
have $\gamma_{3}(v)=3$, $\gamma_{4}(v)=2$, and $\gamma_{k}(v)=0$ for all
$k\notin\{3,4\}$. For example, the $2$-convergences are the subsets
\[
\left\{  a,d,e\right\}  ,\ \ \ \ \ \ \ \ \ \ \left\{  a,d,g\right\}
,\ \ \ \ \ \ \ \ \ \ \left\{  d,e,f\right\}  ,\ \ \ \ \ \ \ \ \ \ \left\{
a,d,e,f\right\}  ,\ \ \ \ \ \ \ \ \ \ \left\{  a,d,e,g\right\}  .
\]
Hence, $\gamma_{3}(2)=3=\gamma_{3}(1)$, $\gamma_{4}(2)=2=\gamma_{4}(1)$, and
$\gamma_{k}(2)=0=\gamma_{k}(1)$ for all $k\notin\{3,4\}$. The same holds for
$v\in\left\{  3,4\right\}  $.
\end{example}

As we will see in Section \ref{sec.partic}, Theorem \ref{thm.balgamma}
generalizes several results known from the literature. However, Theorem
\ref{thm.balgamma} can be generalized even further. Before we state the most
general version, let us tease a corollary of it that is even easier to state
than Theorem \ref{thm.balgamma}: It says that we can essentially drop the
acyclicity requirement in Theorem \ref{thm.balgamma}!

Let us state this rigorously. Given a vertex $s$ of $D$, we define an
\emph{$s$-preconvergence} to be a (not necessarily acyclic) subset $B$ of $A$
such that $s$ is a to-root of the subdigraph $D\left\langle B\right\rangle $.
(Thus, it is defined just like an $s$-convergence, but without requiring it to
be acyclic.) For any $k\in\mathbb{N}$ and $s\in V$, we let $\delta_{k}\left(
s\right)  $ denote the number of $s$-preconvergences of size $k$ (that is,
with $k$ arcs). Then we claim:

\begin{theorem}
\label{thm.baldelta}Let $k\in\mathbb{N}$. Then, the number $\delta_{k}\left(
s\right)  $ does not depend on $s$. That is, $\delta_{k}\left(  s\right)
=\delta_{k}\left(  t\right)  $ for any $s,t\in V$.
\end{theorem}

\begin{example}
\label{exa.1del}Let $D$ be as in Example \ref{exa.1}. Then, for each $s\in V$,
the numbers $\delta_{k}\left(  s\right)  $ for $k=0,1,2,3,\ldots$ are
$0,0,0,3,10,12,6,1,0,0,0,\ldots$. For instance, $\left\{  c,d,f,g\right\}  $
is a $1$-preconvergence, one among the $10$ counted in $\delta_{4}\left(
1\right)  =10$.
\end{example}

Finally, it is time to let the cat out of the bag and state the most general
version of our result. Rather than removing the acyclicity requirement from
Theorem \ref{thm.balgamma}, we shall replace it by a precise set of cycles
that we want our induced subdigraph $D\left\langle B\right\rangle $ to have.

To state this properly, we introduce some more notations. For any subset $B$
of $A$, we let $\operatorname*{Cycs}B$ denote the set of all cycles of the
subdigraph $D\left\langle B\right\rangle $. (Thus, $B$ is acyclic if and only
if $\operatorname*{Cycs}B=\varnothing$. Note that $\operatorname*{Cycs}A$ is
the set of all cycles of $D\left\langle A\right\rangle =D$.) For any
$k\in\mathbb{N}$ and $s\in V$ and any subset $Z$ of $\operatorname*{Cycs}A$,
we let $\gamma_{k,Z}\left(  s\right)  $ denote the number of $s$%
-preconvergences $B$ of size $k$ (that is, with $k$ arcs) that satisfy
$\operatorname*{Cycs}B=Z$. Then our general result is the following:

\begin{theorem}
\label{thm.gen}Let $k\in\mathbb{N}$. Let $Z$ be a subset of
$\operatorname*{Cycs}A$. Then, the number $\gamma_{k,Z}\left(  s\right)  $
does not depend on $s$. That is, $\gamma_{k,Z}\left(  s\right)  =\gamma
_{k,Z}\left(  t\right)  $ for any $s,t\in V$.
\end{theorem}

\begin{example}
\label{exa.1Z}Let $D$ be as in Example \ref{exa.1}, and let $Z=\left\{
\mathbf{c}\right\}  $, where $\mathbf{c}$ is the cycle $\left(
1,a,2,b,1\right)  $ (that is, the cycle with vertices $1$ and $2$ and arcs $a$
and $b$). Then, the $1$-preconvergences $B$ satisfying $\operatorname*{Cycs}%
B=Z$ are%
\[
\left\{  a,b,d,e\right\}  ,\ \ \ \ \ \ \ \ \ \ \left\{  a,b,d,g\right\}
,\ \ \ \ \ \ \ \ \ \ \left\{  a,b,d,e,g\right\}  .
\]
The $2$-preconvergences $B$ satisfying $\operatorname*{Cycs}B=Z$ are exactly
the same. The $3$-preconvergences $B$ satisfying $\operatorname*{Cycs}B=Z$
are
\[
\left\{  a,b,c,e\right\}  ,\ \ \ \ \ \ \ \ \ \ \left\{  a,b,c,g\right\}
,\ \ \ \ \ \ \ \ \ \ \left\{  a,b,c,e,g\right\}  .
\]
Thus, we find, for each $v\in\left\{  1,2,3\right\}  $, that $\gamma
_{4,Z}\left(  v\right)  =2$ and $\gamma_{5,Z}\left(  v\right)  =1$ and
$\gamma_{k,Z}\left(  v\right)  =0$ for all other $k$. The reader can easily
check that this also holds for $v=4$.
\end{example}

Clearly, an $s$-convergence is the same as an acyclic $s$-preconvergence,
i.e., an $s$-preconvergence $B$ that satisfies $\operatorname*{Cycs}%
B=\varnothing$ (since the condition $\operatorname*{Cycs}B=\varnothing$ says
precisely that $D\left\langle B\right\rangle $ has no cycles, i.e., that $B$
is acyclic). Thus, for any $k\in\mathbb{N}$ and $s\in V$, we have $\gamma
_{k}\left(  s\right)  =\gamma_{k,\varnothing}\left(  s\right)  $. Hence,
Theorem \ref{thm.balgamma} is the particular case of Theorem \ref{thm.gen} for
$Z=\varnothing$.

\section{\label{sec.partic}Particular cases}

Theorem \ref{thm.balgamma} was inspired by a talk of Karla Leipold (NORCOM
2025), which made the first author aware of \cite[Lemma 4.1]{LeiVal24} and
\cite[\S 3.1]{PerPha15}. The talk discussed no enumerative questions, yet was
haunted by a perceptible aura of identities. The present note follows this
aura to its source.

Some particular cases of Theorem \ref{thm.balgamma} are known:

\begin{enumerate}
\item When $k=\left\vert V\right\vert -1$, the $s$-convergences $B$ of size
$k$ (or, more precisely, the respective subdigraphs $D\left\langle
B\right\rangle $ of $D$) are precisely the spanning arborescences of $D$
rooted to $s$ (see \cite[Definition 5.10.1 \textbf{(b)}]{22s} for the
definition of this). Indeed, the condition $\left\vert B\right\vert
=\left\vert V\right\vert -1$, combined with the to-rootness of $s$, forces
$D\left\langle B\right\rangle $ to be an arborescence rooted to $s$ (by
\cite[Theorem 5.10.5]{22s}), and conversely, if $D\left\langle B\right\rangle
$ is a spanning arborescence rooted to $s$, then \cite[Theorem 5.10.5]{22s}
shows that $B$ is acyclic and $\left\vert B\right\vert =\left\vert
V\right\vert -1$. Thus, in the case $k=\left\vert V\right\vert -1$, Theorem
\ref{thm.balgamma} is saying that the number of spanning arborescences of
a balanced digraph $D$ rooted to a vertex $s$ does not depend on $s$.
This is a classical result of van Aardenne-Ehrenfest and de Bruijn
\cite[Theorem 6]{AarBru51} (also proved in \cite[Corollary 5.12.1]{22s}).

Likewise, if $k<\left\vert V\right\vert -1$, then Theorem \ref{thm.balgamma}
is just saying that $0=0$, since a spanning subdigraph $D\left\langle
B\right\rangle $ with fewer than $\left\vert V\right\vert -1$ arcs cannot have
a to-root.

\item If $D=G^{\operatorname*{bidir}}$ for some undirected multigraph
$G=\left(  V,E,\varphi\right)  $ (this means that $D$ is obtained from $G$ by
replacing each edge $e$ with two arcs $e^{\rightarrow}$ and $e^{\leftarrow}$,
going in opposite directions), and if $k=\left\vert E\right\vert =\left\vert
A\right\vert /2$, then the $s$-convergences $B$ of size $k$ are just the
acyclic orientations of $G$ with unique sink $s$ (because the acyclicity
condition forbids $B$ from containing both $e^{\rightarrow}$ and
$e^{\leftarrow}$ for any given edge $e\in E$, but the size condition
$\left\vert B\right\vert =k=\left\vert E\right\vert $ forces $B$ to contain at
least one of these two arcs; see Proposition \ref{prop.s-conv.2} for the
uniqueness of the sink). Thus, in this case, Theorem \ref{thm.balgamma} is
saying that the number of acyclic orientations of a given multigraph $G$ with
unique sink $s$ does not depend on $s$. This is part of a result by Greene and
Zaslavsky \cite[Theorem 7.3]{GreZas83}, proved using hyperplane arrangements,
and has recently been reproved combinatorially by Foissy \cite[Proposition
4.6]{Foissy22}. It is also a consequence of \cite[Proposition 5.3]{NoPoSt02}.

\item Up to reversing the directions of the arcs, \cite[Proposition
3.7]{PerPha15} is Theorem \ref{thm.balgamma} for a specific value of $k$ --
namely, for the maximum possible that makes $\gamma_{k}\left(  s\right)  $
nonzero.\footnote{This maximum $k$ has also been considered in \cite[Theorem
1.11]{KalTot23}. Its independence of $s$ was observed in \cite[paragraph below
Theorem 1.9]{KalTot23} as well.}

We note that when $D$ is weakly connected, then this maximum $k$ is also the
maximum size of an acyclic subset of $A$ (not just of an $s$-convergence).
This is a consequence of \cite[Theorem 3.4]{PerPha15}. It thus follows that
finding this maximum $k$ is equivalent to the \emph{maximum acyclic subdigraph
problem} for Eulerian (= weakly connected balanced) digraphs, which is known
to be NP-hard by \cite[Theorem 3.10]{PerPha15} (see also \cite[\S 3.7.1 and
Lemma 4.4.3]{BanGut18}). In the terminology of algorithmic combinatorics, this
problem is often stated in terms of the complement of the acyclic subset; this
complement is known as a \emph{feedback arc set}. In these terms, our
$\gamma_{k}\left(  s\right)  $ counts the feedback arc sets of size
$\left\vert A\right\vert -k$; this counting problem is \#P-complete
\cite[Theorem 5]{Perrot19}.
\end{enumerate}

\begin{noncompile}
TODO: see if something from
https://en.wikipedia.org/wiki/Feedback\_arc\_set\#Equivalences is relevant to us.

(I'm not sure how relevant this is to us). From
\href{https://link.springer.com/chapter/10.1007/978-3-642-45043-3_26}{this},
we can conclude the following. Let $D=(V,A)$ be a simple balanced digraph with
$|V|=n$ and $|A|=m$. If $\deg^{+}v=\deg^{-}v=1$ for all $v\in V$, then the
size of a maximum acyclic subdigraph is at least $m-\frac{n}{3}$, and this
bound is tight. (Thm 1). If $\deg^{+}v=\deg^{-}v\in\{1,2\}$ for all $v\in V$,
then the size of a maximum acyclic subdigraph is at least $\frac{2m}{3}$, and
this bound is tight (Thm. 2 and Corollary 4).

See also https://doi.org/10.1016/S0166-218X(00)00339-5 .
\end{noncompile}

As we already noticed, Theorem \ref{thm.balgamma} is the particular case of
Theorem \ref{thm.gen} for $Z=\varnothing$. We can easily derive Theorem
\ref{thm.baldelta} from Theorem \ref{thm.gen} as well:

\begin{proof}
[Proof of Theorem \ref{thm.baldelta} using Theorem \ref{thm.gen}.]Let $s,t\in
V$. We must prove that $\delta_{k}\left(  s\right)  =\delta_{k}\left(
t\right)  $.

If $B$ is any $s$-preconvergence, then $\operatorname*{Cycs}B\subseteq
\operatorname*{Cycs}A$ (since $D\left\langle B\right\rangle $ is a subdigraph
of $D=D\left\langle A\right\rangle $, so that every cycle of $D\left\langle
B\right\rangle $ is a cycle of $D\left\langle A\right\rangle $). Thus, by the
sum rule,%
\begin{align*}
&  \left(  \text{number of all }s\text{-preconvergences of size }k\right)  \\
&  =\sum_{Z\subseteq\operatorname*{Cycs}A}\underbrace{\left(  \text{number of
all }s\text{-preconvergences }B\text{ of size }k\text{ that satisfy
}\operatorname*{Cycs}B=Z\right)  }_{\substack{=\gamma_{k,Z}\left(  s\right)
\\\text{(by the definition of }\gamma_{k,Z}\left(  s\right)  \text{)}}}\\
&  =\sum_{Z\subseteq\operatorname*{Cycs}A}\gamma_{k,Z}\left(  s\right)  .
\end{align*}
But $\delta_{k}\left(  s\right)  $ was defined as the number of all
$s$-preconvergences of size $k$. Thus,%
\[
\delta_{k}\left(  s\right)  =\left(  \text{number of all }%
s\text{-preconvergences of size }k\right)  =\sum_{Z\subseteq
\operatorname*{Cycs}A}\gamma_{k,Z}\left(  s\right)  .
\]
Similarly,%
\[
\delta_{k}\left(  t\right)  =\sum_{Z\subseteq\operatorname*{Cycs}A}%
\gamma_{k,Z}\left(  t\right)  .
\]
But the sums on the right hand sides of these two equalities are equal, since
their respective addends agree (by Theorem \ref{thm.gen}). Hence, the left
hand sides must also be equal. In other words, $\delta_{k}\left(  s\right)
=\delta_{k}\left(  t\right)  $. This proves Theorem \ref{thm.baldelta}.
\end{proof}

Theorem \ref{thm.baldelta}, too, can be specialized to a result about
orientations of undirected graphs: If $G$ is an undirected multigraph and $s$
is a vertex of $G$, then the number of orientations of $G$ that have $s$ as a
to-root is independent of $s$. This follows by applying Theorem
\ref{thm.baldelta} to $D=G^{\operatorname*{bidir}}$ and $k=\left\vert
E\right\vert $ (the number of edges of $G$). Note that a similar (but not
quite the same) fact is proved in \cite[Theorem 3.4]{Gioan07}.

\section{The proof}

We will prove Theorem~\ref{thm.gen} through a sequence of lemmas, which are
self-contained and might be of independent interest.

If $P$, $Q$ and $S$ are three sets, then the notation \textquotedblleft%
$S=P\sqcup Q$\textquotedblright\ shall mean that $S=P\cup Q$ and $P\cap
Q=\varnothing$. In other words, it shall mean that $S$ is the union of the two
disjoint sets $P$ and $Q$. Of course, if $S=P\sqcup Q$, then $P$ and $Q$ are
subsets of $S$ and we have $\left\vert S\right\vert =\left\vert P\right\vert
+\left\vert Q\right\vert $.

\begin{noncompile}
For two sets $P$ and $Q$, we shall use the notation $P\sqcup Q$ to denote the
\emph{disjoint union} of $P$ and $Q$, that is, the union $P\cup Q$ if the
intersection $P\cap Q$ is empty. Otherwise, $P\sqcup Q$ does not exist. Of
course, if $P\sqcup Q$ exists, then $\left\vert P\sqcup Q\right\vert
=\left\vert P\right\vert +\left\vert Q\right\vert $.
\end{noncompile}

For any subsets $P$ and $Q$ of $V$, we let $A(P,Q)$ denote the set of arcs in
$A$ whose source belongs to $P$ and whose target belongs to $Q$. The following
fact is a simple but crucial property of balanced digraphs:

\begin{proposition}
\label{prop.A-symmetry}Let $P$ and $Q$ be two subsets of $V$ such that
$V=P\sqcup Q$. Then,
\[
\left\vert A\left(  P,Q\right)  \right\vert =\left\vert A\left(  Q,P\right)
\right\vert .
\]

\end{proposition}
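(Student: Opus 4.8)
The plan is to count arcs with exactly one endpoint in each of $P$ and $Q$ in two ways, using the balancedness hypothesis vertex by vertex. First I would observe that since $V = P \sqcup Q$, every arc $a \in A$ has its source in exactly one of $P, Q$ and its target in exactly one of $P, Q$; hence $A$ decomposes as the disjoint union $A(P,P) \sqcup A(P,Q) \sqcup A(Q,P) \sqcup A(Q,Q)$.

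Next I would sum the outdegree over all vertices in $P$. Since $\sum_{v \in P} \deg^{+} v$ counts each arc whose source lies in $P$ exactly once, we get $\sum_{v \in P} \deg^{+} v = \left\vert A(P,P) \right\vert + \left\vert A(P,Q) \right\vert$. Similarly, summing indegrees over $P$ counts each arc whose target lies in $P$ exactly once, so $\sum_{v \in P} \deg^{-} v = \left\vert A(P,P) \right\vert + \left\vert A(Q,P) \right\vert$. Because $D$ is balanced, $\deg^{+} v = \deg^{-} v$ for every $v \in P$, so the two sums are equal; subtracting the common term $\left\vert A(P,P) \right\vert$ yields $\left\vert A(P,Q) \right\vert = \left\vert A(Q,P) \right\vert$, as desired.

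There is essentially no obstacle here: the only thing to be careful about is the bookkeeping of loops and arcs internal to $P$ (those in $A(P,P)$), but these contribute equally to both the indegree sum and the outdegree sum over $P$, so they cancel cleanly; multiple arcs are likewise handled automatically since everything is counted with multiplicity. One could alternatively run the same argument with $Q$ in place of $P$, which gives the same identity. I would write this up in three or four lines, perhaps displaying the chain $\left\vert A(P,Q)\right\vert = \sum_{v \in P}\deg^{+}v - \left\vert A(P,P)\right\vert = \sum_{v \in P}\deg^{-}v - \left\vert A(P,P)\right\vert = \left\vert A(Q,P)\right\vert$ as a single equation.
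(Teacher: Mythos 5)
Your proposal is correct and follows essentially the same route as the paper: sum outdegrees over $P$ to get $\left\vert A(P,P)\right\vert + \left\vert A(P,Q)\right\vert$, sum indegrees over $P$ to get $\left\vert A(P,P)\right\vert + \left\vert A(Q,P)\right\vert$, then use balancedness to equate the two sums and cancel $\left\vert A(P,P)\right\vert$. Nothing to add.
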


\begin{proof}
This is a known fact (see, e.g., \cite[Exercise 9.1]{22s}). The easiest way to
prove it is as follows: By the definition of an outdegree, we
have\footnote{The symbol \textquotedblleft\#\textquotedblright\ means
\textquotedblleft number\textquotedblright.}%
\begin{align}
&  \sum_{p\in P}\deg^{+}p\nonumber\\
&  =\sum_{p\in P}\left(  \text{\# of arcs }a\in A\text{ with source }p\right)
\nonumber\\
&  =\left(  \text{\# of arcs }a\in A\text{ with source in }P\right)
\nonumber\\
&  =\underbrace{\left(  \text{\# of arcs }a\in A\text{ with source in }P\text{
and target in }P\right)  }_{=\left\vert A\left(  P,P\right)  \right\vert
}\nonumber\\
&  \ \ \ \ \ \ \ \ \ \ +\underbrace{\left(  \text{\# of arcs }a\in A\text{
with source in }P\text{ and target in }Q\right)  }_{=\left\vert A\left(
P,Q\right)  \right\vert }\nonumber\\
&  \ \ \ \ \ \ \ \ \ \ \ \ \ \ \ \ \ \ \ \ \left(
\begin{array}
[c]{c}%
\text{since the target of any arc }a\in A\text{ belongs to either }P\text{ or
}Q\text{,}\\
\text{but not to both (because }V=P\sqcup Q\text{)}%
\end{array}
\right) \nonumber\\
&  =\left\vert A\left(  P,P\right)  \right\vert +\left\vert A\left(
P,Q\right)  \right\vert . \label{pf.prop.A-symmetry.sum.1}%
\end{align}
An analogous argument (but with the roles of sources and targets switched)
shows that%
\begin{equation}
\sum_{p\in P}\deg^{-}p=\left\vert A\left(  P,P\right)  \right\vert +\left\vert
A\left(  Q,P\right)  \right\vert . \label{pf.prop.A-symmetry.sum.2}%
\end{equation}

However, each $p\in P$ satisfies $\deg^{+}p=\deg^{-}p$ (since $D$ is
balanced). Thus, the left hand sides of the equalities
(\ref{pf.prop.A-symmetry.sum.1}) and (\ref{pf.prop.A-symmetry.sum.2}) are
equal. Therefore, their right hand sides are equal as well. In other words, we
have%
\[
\left\vert A\left(  P,P\right)  \right\vert +\left\vert A\left(  P,Q\right)
\right\vert =\left\vert A\left(  P,P\right)  \right\vert +\left\vert A\left(
Q,P\right)  \right\vert .
\]
Subtracting $\left\vert A\left(  P,P\right)  \right\vert $ from this equality,
we obtain $\left\vert A\left(  P,Q\right)  \right\vert =\left\vert A\left(
Q,P\right)  \right\vert $. Thus, Proposition \ref{prop.A-symmetry} is proved.
\end{proof}

\begin{noncompile}
OLD\ PROOF\ OF PROPOSITION \ref{prop.A-symmetry}: We have%
\[
A\left(  P,V\right)  =A\left(  P,P\right)  \sqcup A\left(  P,Q\right)
\]
(since the target of an arc belongs to either $P$ or $Q$ but not to both), so
that%
\[
\left\vert A\left(  P,V\right)  \right\vert =\left\vert A\left(  P,P\right)
\right\vert +\left\vert A\left(  P,Q\right)  \right\vert .
\]
However, we can count the arcs in $A\left(  P,V\right)  $ according to their
sources $p\in P$. This yields%
\begin{equation}
\left\vert A\left(  P,V\right)  \right\vert =\sum_{p\in P}\deg^{+}p,
\label{pf.prop.A-symmetry.3}%
\end{equation}
since each arc in $A\left(  P,V\right)  $ is counted in the outdegree of
exactly one $p\in P$ (namely, of its source). Comparing these two equalities,
we obtain%
\begin{equation}
\sum_{p\in P}\deg^{+}p=\left\vert A\left(  P,P\right)  \right\vert +\left\vert
A\left(  P,Q\right)  \right\vert . \label{pf.prop.A-symmetry.4}%
\end{equation}

On the other hand, $A\left(  V,P\right)  =A\left(  P,P\right)  \sqcup A\left(
Q,P\right)  $ (since the source of an arc belongs to either $P$ or $Q$ but not
to both) and thus%
\[
\left\vert A\left(  V,P\right)  \right\vert =\left\vert A\left(  P,P\right)
\right\vert +\left\vert A\left(  Q,P\right)  \right\vert .
\]
Similarly to (\ref{pf.prop.A-symmetry.3}), we can show that%
\[
\left\vert A\left(  V,P\right)  \right\vert =\sum_{p\in P}\deg^{-}p.
\]
Comparing these two equalities, we obtain%
\begin{equation}
\sum_{p\in P}\deg^{-}p=\left\vert A\left(  P,P\right)  \right\vert +\left\vert
A\left(  Q,P\right)  \right\vert . \label{pf.prop.A-symmetry.5}%
\end{equation}

The left hand sides of the equalities (\ref{pf.prop.A-symmetry.4}) and
(\ref{pf.prop.A-symmetry.5}) are equal (since $D$ is balanced, so that each
$p\in P$ satisfies $\deg^{+}p=\deg^{-}p$). Hence, their right hand sides must
be equal as well. That is,%
\[
\left\vert A\left(  P,P\right)  \right\vert +\left\vert A\left(  P,Q\right)
\right\vert =\left\vert A\left(  P,P\right)  \right\vert +\left\vert A\left(
Q,P\right)  \right\vert .
\]
Subtracting $\left\vert A\left(  P,P\right)  \right\vert $ from this equality,
we obtain $\left\vert A\left(  P,Q\right)  \right\vert =\left\vert A\left(
Q,P\right)  \right\vert $. Thus, Proposition \ref{prop.A-symmetry} is proved.
\end{noncompile}

Now, given a subset $B$ of $A$ and two vertices $v,w\in V$, we say that
\textquotedblleft$v$ \emph{can }$B$\emph{-reach} $w$\textquotedblright\ if the
digraph $D\left\langle B\right\rangle $ has a path from $v$ to $w$ (or,
equivalently, a walk from $v$ to $w$).

Fix two vertices $s,t\in V$ and an integer $k\in\mathbb{N}$. We want to show
that $\gamma_{k,Z}\left(  s\right)  =\gamma_{k,Z}\left(  t\right)  $ for any
subset $Z$ of $\operatorname{Cycs} A$.

For any subset $B$ of $A$, we define the subsets%
\begin{align*}
S\left(  B\right)   &  :=\left\{  v\in V\ \mid\ v\text{ can }B\text{-reach
}s\right\}  \ \ \ \ \ \ \ \ \ \ \text{and}\\
T\left(  B\right)   &  :=\left\{  v\in V\ \mid\ v\text{ can }B\text{-reach
}t\right\}  \ \ \ \ \ \ \ \ \ \ \text{of }V.
\end{align*}
We call them the \emph{attraction basins} of $s$ and $t$ with respect to $B$.
Note that $s\in S\left(  B\right)  $ and $t\in T\left(  B\right)  $ always
hold, since each vertex can $B$-reach itself (by a path of length $0$).

We shall now prove several properties of attraction basins, which will be used
in the proof of Lemma \ref{lem.new4} later on. (The adventurous reader can
treat them as exercises, while the impatient reader might prefer to skip them
and return to them as the need arises.)

\begin{lemma}
\label{lem.reach-trans}Let $B$ be a subset of $A$. Let $v,w\in V$ be two
vertices such that $v$ can $B$-reach $w$. Assume that $w\in S\left(  B\right)
$. Then, $v\in S\left(  B\right)  $.
\end{lemma}

\begin{proof}
We have assumed that $v$ can $B$-reach $w$. In other words, the digraph
$D\left\langle B\right\rangle $ has a path $\mathbf{p}$ from $v$ to $w$.
Consider this path $\mathbf{p}$.

Moreover, $w$ can $B$-reach $s$ (since $w\in S\left(  B\right)  $). In other
words, the digraph $D\left\langle B\right\rangle $ has a path $\mathbf{q}$
from $w$ to $s$. Consider this path $\mathbf{q}$.

Concatenating the path $\mathbf{p}$ (from $v$ to $w$) with the path
$\mathbf{q}$ (from $w$ to $s$), we obtain a walk from $v$ to $s$. Thus, the
digraph $D\left\langle B\right\rangle $ has a walk from $v$ to $s$ (namely,
the one we just obtained). Hence, this digraph $D\left\langle B\right\rangle $
also has a path from $v$ to $s$. In other words, $v$ can $B$-reach $s$. That
is, $v\in S\left(  B\right)  $. This proves Lemma \ref{lem.reach-trans}.
\end{proof}

\begin{lemma}
\label{lem.EAQP}Let $P$ and $Q$ be two subsets of $V$ such that $V=P\sqcup Q$.
Let $E$ be a subset of $A$ such that $S\left(  E\right)  =P$. Then, $E\cap
A\left(  Q,P\right)  =\varnothing$.
\end{lemma}

\begin{proof}
Assume the contrary. Thus, there exists some arc $a\in E\cap A\left(
Q,P\right)  $. Consider this $a$. From $a\in E\cap A\left(  Q,P\right)  $, we
obtain $a\in E$ and $a\in A\left(  Q,P\right)  $.

Hence, the arc $a$ belongs to $A\left(  Q,P\right)  $; thus, it has a source
$q\in Q$ and a target $p\in P$. Consider these $q$ and $p$. Clearly, the
digraph $D\left\langle E\right\rangle $ has a walk from $q$ to $p$ (namely,
the walk that consists of the single arc $a\in E$). In other words, $q$ can
$E$-reach $p$. Moreover, $p\in P=S\left(  E\right)  $ (since $S\left(
E\right)  =P$). Hence, Lemma \ref{lem.reach-trans} (applied to $B=E$ and $v=q$
and $w=p$) shows that $q\in S\left(  E\right)  $. Thus, $q\in S\left(
E\right)  =P$, so that $q\notin Q$ (since $V=P\sqcup Q$ shows that the sets
$P$ and $Q$ are disjoint). But this contradicts $q\in Q$. This contradiction
shows that our assumption was false. Hence, Lemma \ref{lem.EAQP} is proven.
\end{proof}

\begin{lemma}
\label{lem.EAPQ}Let $P$ and $Q$ be two subsets of $V$ such that $V=P\sqcup Q$.
Let $E$ be a subset of $A$ such that $T\left(  E\right)  =Q$. Then, $E\cap
A\left(  P,Q\right)  =\varnothing$.
\end{lemma}

\begin{proof}
This is analogous to Lemma \ref{lem.EAQP}. (Just switch the roles of $s$ and
$t$ as well as the roles of $P$ and $Q$.)
\end{proof}

\begin{lemma}
\label{lem.new0}Let $P$ and $Q$ be two subsets of $V$ such that $V=P\sqcup Q$.
Let $E$ be a subset of $A$. Let $C$ be a subset of $A\left(  P,Q\right)  $.
Then, $S\left(  E\right)  =P$ if and only if $S\left(  E\cup C\right)  =P$.
\end{lemma}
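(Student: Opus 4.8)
The plan is to prove the two implications of the ``if and only if'' separately, relying only on the obvious monotonicity of reachability together with the hypothesis $C \subseteq A(P,Q)$ -- which says that every arc of $C$ has source in $P$ and target in $Q$, and in particular that no arc of $C$ lies in $A(P,P)$ or in $A(Q,P)$. First I would record the trivial inclusion $S(E) \subseteq S(E\cup C)$: since $D\langle E\rangle$ is a subdigraph of $D\langle E\cup C\rangle$, every walk in $D\langle E\rangle$ is also a walk in $D\langle E\cup C\rangle$, so any vertex that can $E$-reach $s$ can also $(E\cup C)$-reach $s$. As a special case, $s \in S(E\cup C)$ always holds (via the length-$0$ walk), so as soon as one of $S(E)=P$ or $S(E\cup C)=P$ is known, we also know $s \in P$.

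For the direction ``$S(E\cup C)=P \Rightarrow S(E)=P$'', assume $S(E\cup C)=P$. Then $S(E) \subseteq S(E\cup C) = P$ by the monotonicity above, so it suffices to check $P \subseteq S(E)$. Let $v \in P$, and pick a walk $v=v_0 \to v_1 \to \cdots \to v_m = s$ in $D\langle E\cup C\rangle$. For each index $i$, the portion $v_i \to v_{i+1} \to \cdots \to v_m$ is a walk showing that $v_i$ can $(E\cup C)$-reach $s$, hence $v_i \in S(E\cup C)=P$. So all vertices of the walk lie in $P$, which means every arc of the walk lies in $A(P,P)$; since $A(P,P)$ is disjoint from $A(P,Q) \supseteq C$ (because $P \cap Q = \varnothing$), none of these arcs lies in $C$, so the walk actually lives in $D\langle E\rangle$. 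Hence $v$ can $E$-reach $s$, i.e.\ $v \in S(E)$, completing this direction.

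For the direction ``$S(E)=P \Rightarrow S(E\cup C)=P$'', assume $S(E)=P$. Again $P = S(E) \subseteq S(E\cup C)$ by monotonicity, so it remains to prove $S(E\cup C) \subseteq P$. Suppose not: then some $v \in Q$ can $(E\cup C)$-reach $s$; fix a walk $v=v_0 \to v_1 \to \cdots \to v_m = s$ in $D\langle E\cup C\rangle$. Since $v_0 = v \in Q$ while $v_m = s \in P$ (recall $s \in P$), and since $V = P \sqcup Q$, there must be an index $i$ with $v_i \in Q$ and $v_{i+1} \in P$. The arc $a$ of the walk going from $v_i$ to $v_{i+1}$ then lies in $A(Q,P)$, which is disjoint from $A(P,Q) \supseteq C$; hence $a \in E$. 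But $v_{i+1} \in P = S(E)$, so $v_{i+1}$ can $E$-reach $s$, and prepending the arc $a \in E$ shows that $v_i$ can $E$-reach $s$ as well, i.e.\ $v_i \in S(E)=P$. This contradicts $v_i \in Q$ together with $P \cap Q = \varnothing$. Hence $S(E\cup C) \subseteq P$, so $S(E\cup C)=P$.

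I expect no serious obstacle: the whole argument is a bookkeeping fact about which arcs a walk between prescribed ``sides'' $P$ and $Q$ is forced to use, and the two nontrivial inputs -- ``a walk from $Q$ to $P$ must contain a $Q$-to-$P$ arc'' and ``an arc with source in $P$ cannot leave $P$ without landing in $Q$'' -- are exactly where $V = P \sqcup Q$ and $C \subseteq A(P,Q)$ enter. The one point to be careful about is the direction of the ``crossing'' arc in the second implication: it runs $Q \to P$, not $P \to Q$, which is precisely why it cannot belong to $C$.
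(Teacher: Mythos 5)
Your proof is correct and follows essentially the same route as the paper's: monotonicity of $S$ gives one inclusion in each direction for free, and the remaining inclusion is obtained by analyzing a witnessing walk and showing that its arcs must avoid $C$. The only cosmetic difference is in the direction $S(E)=P\Rightarrow S(E\cup C)=P$, where you locate a $Q$-to-$P$ crossing arc and derive a contradiction, whereas the paper inspects the last arc of the path not belonging to $E$ and notes that its target would have to lie in $Q$; both arguments are valid.
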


\begin{proof}
First, we observe that if $B_{1}$ and $B_{2}$ are two subsets of $A$
satisfying $B_{1}\subseteq B_{2}$, then any path of $D\left\langle
B_{1}\right\rangle $ is a path of $D\left\langle B_{2}\right\rangle $, and
thus we have $S\left(  B_{1}\right)  \subseteq S\left(  B_{2}\right)  $.
Hence, $S\left(  E\right)  \subseteq S\left(  E\cup C\right)  $ (since
$E\subseteq E\cup C$).

Note furthermore that $P$ is disjoint from $Q$ (since $V=P\sqcup Q$).

We must prove the equivalence $\left(  S\left(  E\right)  =P\right)
\ \Longleftrightarrow\ \left(  S\left(  E\cup C\right)  =P\right)  $. We shall
verify the $\Longrightarrow$ and $\Longleftarrow$ directions separately:
\medskip

$\Longrightarrow:$ Assume that $S\left(  E\right)  =P$. We must show that
$S\left(  E\cup C\right)  =P$.

By assumption, we have $P=S\left(  E\right)  \subseteq S\left(  E\cup
C\right)  $. It remains to prove the converse inclusion.

Let $v\in S\left(  E\cup C\right)  $. Thus, $v$ can $E\cup C$-reach $s$. In
other words, the digraph $D\left\langle E\cup C\right\rangle $ has a path
$\mathbf{p}$ from $v$ to $s$. Consider this path $\mathbf{p}$.

We claim that all arcs of $\mathbf{p}$ belong to $E$. Indeed, let us assume
the contrary. Then, at least one of the arcs of $\mathbf{p}$ does not belong
to $E$. Let $c$ be the \textbf{last} arc of $\mathbf{p}$ that does not belong
to $E$, and let $q$ be the target of this arc $c$.

The arc $c$ belongs to $E\cup C$ (since it is part of the path $\mathbf{p}$,
which is a path of $D\left\langle E\cup C\right\rangle $). Since it does not
belong to $E$, it must thus belong to $C$. Thus, $c\in C\subseteq A\left(
P,Q\right)  $, so that the target of $c$ belongs to $Q$. In other words, $q\in
Q$ (since $q$ is the target of $c$).

However, $c$ is the \textbf{last} arc of $\mathbf{p}$ that does not belong to
$E$. Thus, all the arcs of $\mathbf{p}$ that come after $c$ must belong to
$E$. Therefore, the arcs of $\mathbf{p}$ that come after $c$ form a path of
the digraph $D\left\langle E\right\rangle $. This path starts at $q$ (the
target of $c$) and ends at $s$. Hence, we have shown that $D\left\langle
E\right\rangle $ has a path from $q$ to $s$. In other words, $q$ can $E$-reach
$s$, meaning that $q\in S\left(  E\right)  $. Thus, $q\in S\left(  E\right)
=P$, so that $q\notin Q$ (since $P$ is disjoint from $Q$). This contradicts
$q\in Q$.

This contradiction shows that our assumption was false. Hence, all arcs of
$\mathbf{p}$ belong to $E$. Therefore, $\mathbf{p}$ is a path of
$D\left\langle E\right\rangle $. Hence, $v$ can $E$-reach $s$ (by the path
$\mathbf{p}$). That is, $v\in S\left(  E\right)  =P$.

Since we have proved this for each $v\in S\left(  E\cup C\right)  $, we thus
conclude that $S\left(  E\cup C\right)  \subseteq P$. Combining this with
$P\subseteq S\left(  E\cup C\right)  $, we obtain $S\left(  E\cup C\right)
=P$. This proves the \textquotedblleft$\Longrightarrow$\textquotedblright%
\ direction of Lemma \ref{lem.new0}. \medskip

$\Longleftarrow:$ Assume that $S\left(  E\cup C\right)  =P$. We must show that
$S\left(  E\right)  =P$.

We have $S\left(  E\right)  \subseteq S\left(  E\cup C\right)  =P$ (by
assumption). It remains to prove the converse inclusion.

Let $p\in P$. Then, $p\in P=S\left(  E\cup C\right)  $ (by assumption). Hence,
$p$ can $E\cup C$-reach $s$. That is, the digraph $D\left\langle E\cup
C\right\rangle $ has a path $\mathbf{p}$ from $p$ to $s$. Any vertex $v$ of
this path $\mathbf{p}$ must itself belong to $S\left(  E\cup C\right)  $
(since it can $E\cup C$-reach $s$ by walking along $\mathbf{p}$ from $v$ to
$s$), so it cannot belong to $Q$ (since $S\left(  E\cup C\right)  =P$ is
disjoint from $Q$). Therefore, no arc of this path $\mathbf{p}$ can belong to
$A\left(  P,Q\right)  $ (since this would require its target to belong to
$Q$). Thus, all arcs of this path $\mathbf{p}$ belong to $\left(  E\cup
C\right)  \setminus\underbrace{A\left(  P,Q\right)  }_{\supseteq C}%
\subseteq\left(  E\cup C\right)  \setminus C\subseteq E$. Hence, $\mathbf{p}$
is a path of $D\left\langle E\right\rangle $. Consequently, the vertex $p$ can
$E$-reach $s$ (via the path $\mathbf{p}$). In other words, $p\in S\left(
E\right)  $. Since we have proved this for each $p\in P$, we conclude that
$P\subseteq S\left(  E\right)  $. Therefore, $S\left(  E\right)  =P$ (since
$S\left(  E\right)  \subseteq P$). This proves the \textquotedblleft%
$\Longleftarrow$\textquotedblright\ direction of Lemma \ref{lem.new0}.
\end{proof}

\begin{lemma}
\label{lem.newC}Let $P$ and $Q$ be two subsets of $V$ such that $V=P\sqcup Q$.
Let $E$ be a subset of $A$ such that $S\left(  E\right)  =P$. Let $C$ be a
subset of $A\left(  P,Q\right)  $. Then, $\operatorname*{Cycs}\left(  E\cup
C\right)  =\operatorname*{Cycs}E$.
\end{lemma}

\begin{proof}
First, we observe that if $B_{1}$ and $B_{2}$ are two subsets of $A$
satisfying $B_{1}\subseteq B_{2}$, then any cycle of $D\left\langle
B_{1}\right\rangle $ is a cycle of $D\left\langle B_{2}\right\rangle $, and
thus we have $\operatorname*{Cycs}B_{1}\subseteq\operatorname*{Cycs}B_{2}$
(since $\operatorname*{Cycs}B$ denotes the set of all cycles of $D\left\langle
B\right\rangle $). Applying this to $B_{1}=E$ and $B_{2}=E\cup C$, we obtain
$\operatorname*{Cycs}E\subseteq\operatorname*{Cycs}\left(  E\cup C\right)  $
(since $E\subseteq E\cup C$).

We shall now prove that $\operatorname*{Cycs}\left(  E\cup C\right)
\subseteq\operatorname*{Cycs}E$. Indeed, let $\mathbf{c}\in
\operatorname*{Cycs}\left(  E\cup C\right)  $ be arbitrary. Thus, $\mathbf{c}$
is a cycle of $D\left\langle E\cup C\right\rangle $ (since
$\operatorname*{Cycs}\left(  E\cup C\right)  $ is the set of all cycles of
$D\left\langle E\cup C\right\rangle $). We want to show that $\mathbf{c}$ is a
cycle of $D\left\langle E\right\rangle $ as well.

Assume the contrary. Thus, $\mathbf{c}$ is not a cycle of $D\left\langle
E\right\rangle $. In other words, not all arcs of $\mathbf{c}$ belong to $E$.
But $\mathbf{c}$ is a cycle of $D\left\langle E\cup C\right\rangle $; thus,
each arc of $\mathbf{c}$ belongs to $E\cup C$. In other words, each arc of
$\mathbf{c}$ belongs to $E$ or to $C$. Hence, some arc of $\mathbf{c}$ belongs
to $C$ (because not all arcs of $\mathbf{c}$ belong to $E$). This arc must
therefore belong to $A\left(  P,Q\right)  $ as well (since $C\subseteq
A\left(  P,Q\right)  $); in other words, this arc has source in $P$ and target
in $Q$. Thus, the cycle $\mathbf{c}$ contains both a vertex in $P$ and a
vertex in $Q$ (namely, the source and the target of this arc). Consequently,
it must cross from $Q$ to $P$ at some point (since $V=P\sqcup Q$). In other
words, it contains an arc $a$ with source in $Q$ and target in $P$. In other
words, it contains an arc $a\in A\left(  Q,P\right)  $. Consider this arc $a$.

Lemma \ref{lem.EAQP} yields $E\cap A\left(  Q,P\right)  =\varnothing$. Hence,
$a\notin E$ (because otherwise, we would have $a\in E$, which could be
combined with $a\in A\left(  Q,P\right)  $ to obtain $a\in E\cap A\left(
Q,P\right)  =\varnothing$, which is absurd). Combining this with $a\in E\cup
C$ (since $a$ is an arc of $\mathbf{c}$, which is a cycle of $D\left\langle
E\cup C\right\rangle $), we obtain $a\in\left(  E\cup C\right)  \setminus
E\subseteq C\subseteq A\left(  P,Q\right)  $. Thus, the source of $a$ belongs
to $P$. On the other hand, the source of $a$ belongs to $Q$ (since $a\in
A\left(  Q,P\right)  $). The preceding two sentences contradict each other,
since the sets $P$ and $Q$ are disjoint (because $V=P\sqcup Q$).

This contradiction shows that our assumption was wrong. Hence, we have shown
that $\mathbf{c}$ is a cycle of $D\left\langle E\right\rangle $. In other
words, $\mathbf{c}\in\operatorname*{Cycs}E$.

Forget that we fixed $\mathbf{c}$. We thus have shown that $\mathbf{c}%
\in\operatorname*{Cycs}E$ for each $\mathbf{c}\in\operatorname*{Cycs}\left(
E\cup C\right)  $. In other words, $\operatorname*{Cycs}\left(  E\cup
C\right)  \subseteq\operatorname*{Cycs}E$. Combining this with
$\operatorname*{Cycs}E\subseteq\operatorname*{Cycs}\left(  E\cup C\right)  $,
we obtain $\operatorname*{Cycs}\left(  E\cup C\right)  =\operatorname*{Cycs}%
E$. This proves Lemma \ref{lem.newC}.
\end{proof}

\begin{lemma}
\label{lem.newE}Let $P$ and $Q$ be two subsets of $V$ such that $V=P\sqcup Q$
and $t\in Q$. Let $E$ be a subset of $A$ that is disjoint from $A\left(
P,Q\right)  $. Then, $T\left(  E\right)  \subseteq Q$.
\end{lemma}

\begin{proof}
Let $x\in T\left(  E\right)  $ be any vertex. We must show that $x\in Q$.

Assume the contrary. Then, $x\in P$ (since $V=P\sqcup Q$). But $x$ can
$E$-reach $t$ (since $x\in T\left(  E\right)  $). So the digraph
$D\left\langle E\right\rangle $ has a path $\mathbf{x}$ from $x$ to $t$. This
path $\mathbf{x}$ is a path of $D\left\langle E\right\rangle $; thus, all its
arcs belong to $E$.

The path $\mathbf{x}$ starts at a vertex in $P$ (namely, at $x\in P$) and ends
at a vertex in $Q$ (namely, at $t\in Q$). So this path $\mathbf{x}$ must cross
from $P$ to $Q$ at some point (since $V=P\sqcup Q$). In other words, it must
contain an arc that belongs to $A\left(  P,Q\right)  $. But this is
impossible, since all arcs of $\mathbf{x}$ belong to the set $E$, which is
disjoint from $A\left(  P,Q\right)  $. This contradiction shows that $x\in Q$.
Since we have proved this for each $x\in T\left(  E\right)  $, we thus
conclude that $T\left(  E\right)  \subseteq Q$. This proves Lemma
\ref{lem.newE}.
\end{proof}

\begin{lemma}
\label{lem.newD}Let $P$ and $Q$ be two subsets of $V$ such that $V=P\sqcup Q$
and such that $Q$ is nonempty. Let $B$ be a subset of $A$ such that $S\left(
B\right)  =P$ and $S\left(  B\right)  \cup T\left(  B\right)  =V$. Let
$C=B\cap A\left(  P,Q\right)  $. Then, $T\left(  B\setminus C\right)  =Q$.
\end{lemma}

\begin{proof}
From $S\left(  B\right)  \cup T\left(  B\right)  =V$ and $S\left(  B\right)
=P$, we obtain $\underbrace{\left(  S\left(  B\right)  \cup T\left(  B\right)
\right)  }_{=V}\setminus\underbrace{S\left(  B\right)  }_{=P}=V\setminus P=Q$
(since $V=P\sqcup Q$), so that%
\[
Q=\left(  S\left(  B\right)  \cup T\left(  B\right)  \right)  \setminus
S\left(  B\right)  =T\left(  B\right)  \setminus S\left(  B\right)  .
\]

Next, we claim that $t\in Q$. Indeed, there exists some vertex $v\in Q$ (since
$Q$ is nonempty). Consider this $v$. Then, $v\in Q=T\left(  B\right)
\setminus S\left(  B\right)  $, so that $v\in T\left(  B\right)  $ and
$v\notin S\left(  B\right)  $. From $v\in T\left(  B\right)  $, we see that
$v$ can $B$-reach $t$. If we had $t\in S\left(  B\right)  $, then Lemma
\ref{lem.reach-trans} (applied to $w=t$) would thus show that $v\in S\left(
B\right)  $, which would contradict $v\notin S\left(  B\right)  $. Hence, we
cannot have $t\in S\left(  B\right)  $. Hence, $t\notin S\left(  B\right)
=P$, so that $t\in Q$ (since $V=P\sqcup Q$).

From $C=B\cap A\left(  P,Q\right)  $, we obtain
\begin{equation}
B\setminus C=B\setminus\left(  B\cap A\left(  P,Q\right)  \right)  =B\setminus
A\left(  P,Q\right)  . \label{pf.lem.newD.sm}%
\end{equation}
Hence, the set $B\setminus C$ is disjoint from $A\left(  P,Q\right)  $ (since
$B\setminus A\left(  P,Q\right)  $ clearly is). Thus, Lemma \ref{lem.newE}
(applied to $E=B\setminus C$) yields that $T\left(  B\setminus C\right)
\subseteq Q$ (since $t\in Q$).

Now, we shall show that $Q\subseteq T\left(  B\setminus C\right)  $. Indeed,
let $q\in Q$ be any vertex. Then, $q\in Q=T\left(  B\right)  \setminus
S\left(  B\right)  $. That is, $q\in T\left(  B\right)  $ and $q\notin
S\left(  B\right)  $. In particular, $q$ can $B$-reach $t$ (since $q\in
T\left(  B\right)  $). That is, the digraph $D\left\langle B\right\rangle $
has a path $\mathbf{q}$ from $q$ to $t$.

Let $w$ be any vertex of this path $\mathbf{q}$. Then, $q$ can $B$-reach $w$
as well (by following the path $\mathbf{q}$ from $q$ until $w$). If we had
$w\in S\left(  B\right)  $, then this would entail $q\in S\left(  B\right)  $
(by Lemma \ref{lem.reach-trans}, applied to $v=q$), which would contradict
$q\notin S\left(  B\right)  $. Thus, we cannot have $w\in S\left(  B\right)
$. In other words, we have $w\notin S\left(  B\right)  =P$.

Forget that we fixed $w$. Thus, we have showed that $w\notin P$ for any vertex
$w$ of the path $\mathbf{q}$. In other words, no vertex of the path
$\mathbf{q}$ belongs to $P$. Therefore, no arc of the path $\mathbf{q}$
belongs to $A\left(  P,Q\right)  $ (since the source of such an arc would be a
vertex of $\mathbf{q}$ and belong to $P$). Thus, all arcs of $\mathbf{q}$
belong to $B\setminus A\left(  P,Q\right)  =B\setminus C$ (by
(\ref{pf.lem.newD.sm})). Hence, $\mathbf{q}$ is a path of $D\left\langle
B\setminus C\right\rangle $. This shows that $q$ can $B\setminus C$-reach $t$
(via the path $\mathbf{q}$). In other words, $q\in T\left(  B\setminus
C\right)  $. Since we have proved this for each $q\in Q$, we thus conclude
that $Q\subseteq T\left(  B\setminus C\right)  $.

Combining this with $T\left(  B\setminus C\right)  \subseteq Q$, we obtain
$T\left(  B\setminus C\right)  =Q$. This proves Lemma \ref{lem.newD}.
\end{proof}

Now, we take aim at proving Theorem \ref{thm.gen}. Let us fix a subset $Z$ of
$\operatorname*{Cycs}A$.

For any $i\in\mathbb{Z}$, we let $\mathcal{P}_{i}\left(  A\right)  $ denote
the set of all $i$-element subsets of $A$. Define the subset%
\[
U_{k,Z}:=\left\{  B\in\mathcal{P}_{k}\left(  A\right)  \ \mid\ S\left(
B\right)  \cup T\left(  B\right)  =V\text{ and }\operatorname*{Cycs}%
B=Z\right\}  \ \ \ \ \ \ \ \ \ \ \text{of }\mathcal{P}_{k}\left(  A\right)  .
\]

We observe the following:

\begin{lemma}
\label{lem.new-1}We have%
\[
\left\vert \left\{  B\in U_{k,Z}\ \mid\ S\left(  B\right)  =V\right\}
\right\vert =\gamma_{k,Z}\left(  s\right)  .
\]

\end{lemma}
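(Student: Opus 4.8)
The plan is to show that the set $\left\{  B\in U_{k}\ \mid\ S\left(  B\right)  =V\right\}$ is literally equal to the set of all $s$-convergences of size $k$; once this is established, Lemma \ref{lem.new-1} follows at once from the definition of $\gamma_{k}\left(  s\right)$ as the number of $s$-convergences of size $k$.

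First I would spell out what membership in $\left\{  B\in U_{k}\ \mid\ S\left(  B\right)  =V\right\}$ means. By the definition of $U_{k}$, an element $B$ of this set is an acyclic $k$-element subset of $A$ satisfying $S\left(  B\right)  \cup T\left(  B\right)  =V$ and, additionally, $S\left(  B\right)  =V$. The condition $S\left(  B\right)  =V$ already implies $S\left(  B\right)  \cup T\left(  B\right)  =V$, so the latter is redundant here. Thus $\left\{  B\in U_{k}\ \mid\ S\left(  B\right)  =V\right\}$ is precisely the set of acyclic $k$-element subsets $B$ of $A$ with $S\left(  B\right)  =V$.

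Next I would unwind the condition $S\left(  B\right)  =V$. It says that every vertex $v\in V$ can $B$-reach $s$, i.e., that $D\left\langle B\right\rangle$ has a path from each vertex to $s$, i.e., that $s$ is a to-root of $D\left\langle B\right\rangle$ (note that $s\in V$ is a vertex of $D\left\langle B\right\rangle$, since the latter has vertex set $V$). Hence an acyclic $k$-element subset $B$ of $A$ satisfies $S\left(  B\right)  =V$ if and only if $B$ is an $s$-convergence of size $k$. Combining this with the previous paragraph, the set $\left\{  B\in U_{k}\ \mid\ S\left(  B\right)  =V\right\}$ is exactly the set of $s$-convergences of size $k$, which by definition has $\gamma_{k}\left(  s\right)$ elements. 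This proves Lemma \ref{lem.new-1}.

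The proof is a direct chase through the definitions and presents no real obstacle; the only point requiring a moment's care is the harmless redundancy noted above, namely that the $U_{k}$-membership clause $S\left(  B\right)  \cup T\left(  B\right)  =V$ contributes nothing once $S\left(  B\right)  =V$ is imposed.
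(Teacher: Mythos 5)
Your proof is correct and follows essentially the same route as the paper's: unwind the definition of $U_{k}$, observe that $S\left(B\right)=V$ subsumes the clause $S\left(B\right)\cup T\left(B\right)=V$, and identify the resulting set with the set of $s$-convergences of size $k$ via the definition of a to-root. No gaps.
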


\begin{proof}
Note that a subset $B$ of $A$ is an $s$-preconvergence

\qquad\qquad if and only if $s$ is a to-root of $D\left\langle B\right\rangle
$,

\qquad i.e.,\quad if and only if each vertex $v\in V$ has a path to $s$ in
$D\left\langle B\right\rangle $,

\qquad i.e.,\quad if and only if each vertex $v\in V$ can $B$-reach $s$,

\qquad i.e.,\quad if and only if $S\left(  B\right)  =V$

\noindent(since $S\left(  B\right)  $ is defined as the set of all vertices
$v\in V$ that can $B$-reach $s$). Thus, an $s$-preconvergence is the same
thing as a subset $B$ of $A$ that satisfies $S\left(  B\right)  =V$.

Hence, the number $\gamma_{k,Z}\left(  s\right)  $ of all $s$-preconvergences
$B$ of size $k$ that satisfy $\operatorname*{Cycs}B=Z$ can also be described
as the number of all subsets $B$ of $A$ of size $k$ that satisfy $S\left(
B\right)  =V$ and $\operatorname*{Cycs}B=Z$. In other words,%
\begin{equation}
\gamma_{k,Z}\left(  s\right)  =\left\vert \left\{  B\in\mathcal{P}_{k}\left(
A\right)  \text{ }\mid\ S\left(  B\right)  =V\text{ and }\operatorname*{Cycs}%
B=Z\right\}  \right\vert . \label{pf.lem.new-1.2}%
\end{equation}
But any $B\in\mathcal{P}_{k}\left(  A\right)  $ satisfying $S\left(  B\right)
=V$ and $\operatorname*{Cycs}B=Z$ must also satisfy $B\in U_{k,Z}$ (since
$\underbrace{S\left(  B\right)  }_{=V}\cup\,T\left(  B\right)  =V\cup T\left(
B\right)  =V$). Conversely, the definition of $U_{k,Z}$ shows that any $B\in
U_{k,Z}$ belongs to $\mathcal{P}_{k}\left(  A\right)  $ and satisfies
$\operatorname*{Cycs}B=Z$. These two facts show that%
\[
\left\{  B\in\mathcal{P}_{k}\left(  A\right)  \text{ }\mid\ S\left(  B\right)
=V\text{ and }\operatorname*{Cycs}B=Z\right\}  =\left\{  B\in U_{k,Z}%
\ \mid\ S\left(  B\right)  =V\right\}  .
\]
Hence, we can rewrite (\ref{pf.lem.new-1.2}) as
\[
\gamma_{k,Z}\left(  s\right)  =\left\vert \left\{  B\in U_{k,Z}\ \mid
\ S\left(  B\right)  =V\right\}  \right\vert .
\]
This proves Lemma \ref{lem.new-1}.
\end{proof}

Now we claim the following:

\begin{lemma}
\label{lem.new1} We have%
\[
\gamma_{k,Z}\left(  s\right)  =\left\vert U_{k,Z}\right\vert -\sum
_{\substack{P,Q\subseteq V\text{ are nonempty;}\\V=P\sqcup Q}}\left\vert
\left\{  B\in U_{k,Z}\ \mid\ S\left(  B\right)  =P\right\}  \right\vert .
\]

\end{lemma}

\begin{proof}
First, we note that the set $V$ is nonempty (since $s\in V$).

For each $B\in U_{k,Z}$, there is a unique pair $\left(  P,Q\right)  $ of
subsets $P,Q\subseteq V$ satisfying $V=P\sqcup Q$ such that $S\left(
B\right)  =P$ (indeed, the condition $S\left(  B\right)  =P$ uniquely
determines $P$, while the condition $V=P\sqcup Q$ forces $Q$ to be $V\setminus
P$). Moreover, in this unique pair $\left(  P,Q\right)  $, the set $P$ is
nonempty (since $s\in S\left(  B\right)  =P$). Hence, for each $B\in U_{k,Z}$,
there is a unique pair $\left(  P,Q\right)  $ of subsets $P,Q\subseteq V$
satisfying \textquotedblleft$P$ is nonempty\textquotedblright\ and $V=P\sqcup
Q$ such that $S\left(  B\right)  =P$. Therefore, we can count the elements
$B\in U_{k,Z}$ by the sum rule, according to the value of this pair:%
\begin{align*}
\left\vert U_{k,Z}\right\vert  &  =\sum_{\substack{P,Q\subseteq V;\\P\text{ is
nonempty;}\\V=P\sqcup Q}}\left\vert \left\{  B\in U_{k,Z}\ \mid\ S\left(
B\right)  =P\right\}  \right\vert \\
&  =\sum_{\substack{P,Q\subseteq V\text{ are nonempty;}\\V=P\sqcup
Q}}\left\vert \left\{  B\in U_{k,Z}\ \mid\ S\left(  B\right)  =P\right\}
\right\vert +\underbrace{\left\vert \left\{  B\in U_{k,Z}\ \mid\ S\left(
B\right)  =V\right\}  \right\vert }_{\substack{=\gamma_{k,Z}\left(  s\right)
\\\text{(by Lemma \ref{lem.new-1})}}}\\
&  \ \ \ \ \ \ \ \ \ \ \ \ \ \ \ \ \ \ \ \ \left(
\begin{array}
[c]{c}%
\text{here, we have split off the addend}\\
\text{for }\left(  P,Q\right)  =\left(  V,\varnothing\right)  \text{ from the
sum}\\
\text{(this addend indeed exists because }V\text{ is nonempty)}%
\end{array}
\right) \\
&  =\sum_{\substack{P,Q\subseteq V\text{ are nonempty;}\\V=P\sqcup
Q}}\left\vert \left\{  B\in U_{k,Z}\ \mid\ S\left(  B\right)  =P\right\}
\right\vert +\gamma_{k,Z}\left(  s\right)  .
\end{align*}
Solving this for $\gamma_{k,Z}\left(  s\right)  $, we obtain the claim of the lemma.
\end{proof}

Similarly, we find:

\begin{lemma}
\label{lem.new2}We have%
\[
\gamma_{k,Z}\left(  t\right)  =\left\vert U_{k,Z}\right\vert -\sum
_{\substack{P,Q\subseteq V\text{ are nonempty;}\\V=P\sqcup Q}}\left\vert
\left\{  B\in U_{k,Z}\ \mid\ T\left(  B\right)  =Q\right\}  \right\vert .
\]

\end{lemma}

\begin{proof}
Analogous to the proof of Lemma \ref{lem.new1}. (Just switch the roles of $s$
and $t$ and also the roles of $P$ and $Q$.)
\end{proof}

Our goal is to prove $\gamma_{k,Z}\left(  s\right)  =\gamma_{k,Z}\left(
t\right)  $. In light of Lemma \ref{lem.new1} and Lemma \ref{lem.new2}, it
will suffice to show the following:

\begin{proposition}
\label{prop.new3}Let $P$ and $Q$ be two nonempty subsets of $V$ such that
$V=P\sqcup Q$. Then,%
\[
\left\vert \left\{  B\in U_{k,Z}\ \mid\ S\left(  B\right)  =P\right\}
\right\vert =\left\vert \left\{  B\in U_{k,Z}\ \mid\ T\left(  B\right)
=Q\right\}  \right\vert .
\]

\end{proposition}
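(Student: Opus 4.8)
The plan is to count both sides by first stripping off the arcs that point the ``wrong way'' across the partition $V=P\sqcup Q$, and then matching the remaining freedom via Proposition~\ref{prop.A-symmetry}. Before that I would dispose of the degenerate cases. If $s\notin P$, then $S\left(B\right)=P$ is impossible (since $s\in S\left(B\right)$ always), so the left-hand side is $0$; and the right-hand side is $0$ too, because if $B\in U_k$ satisfies $T\left(B\right)=Q$ then no arc of $B$ lies in $A\left(P,Q\right)$ (the source of such an arc would $B$-reach $t$ through its target, hence lie in $T\left(B\right)=Q$, which is absurd), so every $B$-walk out of a vertex of $P$ stays in $P$, giving $S\left(B\right)\subseteq Q$ and $S\left(B\right)\cup T\left(B\right)\subseteq Q\subsetneq V$, contradicting $B\in U_k$. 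A symmetric argument handles $t\notin Q$. Henceforth I assume $s\in P$ and $t\in Q$.

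Let $\mathcal{W}$ be the set of all acyclic subsets $W\subseteq A\left(P,P\right)\cup A\left(Q,Q\right)$ with $S\left(W\right)=P$ and $T\left(W\right)=Q$. The core claim is that the map $\left(W,C\right)\mapsto W\cup C$ is a bijection from $\left\{\left(W,C\right)\ \mid\ W\in\mathcal{W},\ C\subseteq A\left(P,Q\right),\ \abs{W}+\abs{C}=k\right\}$ onto $\left\{B\in U_k\ \mid\ S\left(B\right)=P\right\}$, with inverse $B\mapsto\left(B\setminus A\left(P,Q\right),\ B\cap A\left(P,Q\right)\right)$. In the forward direction, $W\cup C$ has $k$ arcs (the sets $A\left(P,P\right),A\left(Q,Q\right),A\left(P,Q\right)$ being disjoint); it is acyclic because neither $W$ nor $C$ has an arc from $Q$ to $P$, so no cycle of $D\left\langle W\cup C\right\rangle$ can cross the partition and hence every such cycle already lies in $D\left\langle W\right\rangle$; Lemma~\ref{lem.new0} (with $E=W$) gives $S\left(W\cup C\right)=S\left(W\right)=P$; and $T\left(W\cup C\right)\supseteq T\left(W\right)=Q$, so $S\left(W\cup C\right)\cup T\left(W\cup C\right)=V$ and $W\cup C\in U_k$. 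Conversely, given $B\in U_k$ with $S\left(B\right)=P$, no arc of $B$ lies in $A\left(Q,P\right)$ (else its source, lying in $Q$, would $B$-reach $s$ and thus belong to $S\left(B\right)=P$), so $W:=B\setminus A\left(P,Q\right)\subseteq A\left(P,P\right)\cup A\left(Q,Q\right)$; then $W\subseteq B$ is acyclic, Lemma~\ref{lem.new0} gives $S\left(W\right)=S\left(B\right)=P$, and because $W$ has no arc between $P$ and $Q$, every $B$-walk starting in $Q$ remains in $Q$ and uses only arcs of $W$, so $T\left(B\right)\supseteq Q$ (which follows from $S\left(B\right)=P$ and $B\in U_k$) yields $Q\subseteq T\left(W\right)$, while $t\in Q$ forces $T\left(W\right)\subseteq Q$; hence $W\in\mathcal{W}$. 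The two maps are visibly mutually inverse.

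Repeating the argument with the roles of $\left(s,P\right)$ and $\left(t,Q\right)$ swapped --- using that $B\cap A\left(P,Q\right)=\varnothing$ whenever $T\left(B\right)=Q$, the form of Lemma~\ref{lem.new0} with $T$ and $A\left(Q,P\right)$ in place of $S$ and $A\left(P,Q\right)$ (valid by the same proof), and the hypothesis $s\in P$ --- shows that $\left(W,C'\right)\mapsto W\cup C'$ is a bijection from $\left\{\left(W,C'\right)\ \mid\ W\in\mathcal{W},\ C'\subseteq A\left(Q,P\right),\ \abs{W}+\abs{C'}=k\right\}$ onto $\left\{B\in U_k\ \mid\ T\left(B\right)=Q\right\}$, with the \emph{same} index set $\mathcal{W}$, since its defining conditions $S\left(W\right)=P$ and $T\left(W\right)=Q$ are symmetric in the two sides. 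Consequently
\begin{align*}
\abs{\left\{B\in U_k\ \mid\ S\left(B\right)=P\right\}} &=\sum_{W\in\mathcal{W}}\binom{\abs{A\left(P,Q\right)}}{k-\abs{W}}\\
&=\sum_{W\in\mathcal{W}}\binom{\abs{A\left(Q,P\right)}}{k-\abs{W}}=\abs{\left\{B\in U_k\ \mid\ T\left(B\right)=Q\right\}},
\end{align*}
the middle equality being Proposition~\ref{prop.A-symmetry}. The main obstacle is the backward direction of the bijection, and within it the verification that $T\left(W\right)=Q$: this is the single place where $B\in U_k$, the vanishing of $B\cap A\left(Q,P\right)$, and the case hypothesis $t\in Q$ all have to be brought to bear. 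Everything else is routine bookkeeping.
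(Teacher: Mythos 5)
Your proof is correct and follows essentially the same route as the paper: the paper's Lemmas \ref{lem.new4} and \ref{lem.new5} perform exactly your two bijections (its set $X_{i}^{P,Q}$ is your $\left\{ W\in\mathcal{W}\ \mid\ \left\vert W\right\vert =i\right\}$, since the paper shows elements of $X_{i}^{P,Q}$ avoid both $A\left(P,Q\right)$ and $A\left(Q,P\right)$), and the final step is the same binomial count combined with Proposition \ref{prop.A-symmetry}. Your explicit treatment of the case $s\notin P$ is handled implicitly in the paper (all relevant sets are empty there), but is a harmless addition.
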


We will prove this by finding analogous expressions for both sides. First, we
need some notation:

\begin{definition}
If $P$ and $Q$ are two subsets of $V$ satisfying $V=P\sqcup Q$, and if
$i\in\mathbb{Z}$ is arbitrary, then we set%
\[
X_{i,Z}^{P,Q}:=\left\{  B\in\mathcal{P}_{i}(A)\ \mid\ S(B)=P\text{ and
}T(B)=Q\text{ and }\operatorname*{Cycs}B=Z\right\}  .
\]

\end{definition}

\begin{lemma}
\label{lem.new4}Let $P$ and $Q$ be two nonempty subsets of $V$ such that
$V=P\sqcup Q$. Then,%
\[
\left\vert \left\{  B\in U_{k,Z}\ \mid\ S\left(  B\right)  =P\right\}
\right\vert =\sum_{m\in\mathbb{N}}\dbinom{\left\vert A\left(  P,Q\right)
\right\vert }{m}\cdot\left\vert X_{k-m,Z}^{P,Q}\right\vert .
\]

\end{lemma}

We note that the infinite sum on the right hand side here is well-defined,
since it has only finitely many nonzero addends.\footnote{Indeed, all negative
integers $i$ satisfy $\mathcal{P}_{i}\left(  A\right)  =\varnothing$ and thus
$X_{i,Z}^{P,Q}=\varnothing$, so that $\left\vert X_{i,Z}^{P,Q}\right\vert =0$;
thus, we conclude that $\left\vert X_{k-m,Z}^{P,Q}\right\vert =0$ whenever
$m>k$. Alternatively, we can observe that $\dbinom{\left\vert A\left(
P,Q\right)  \right\vert }{m}=0$ whenever $m>\left\vert A\left(  P,Q\right)
\right\vert $.}

\begin{proof}
[Proof of Lemma \ref{lem.new4}.]By the sum rule,
\begin{align}
&  \left\vert \left\{  B\in U_{k,Z}\ \mid\ S\left(  B\right)  =P\right\}
\right\vert \nonumber\\
&  =\sum_{C\subseteq A\left(  P,Q\right)  }\left\vert \left\{  B\in
U_{k,Z}\ \mid\ S\left(  B\right)  =P\text{ and }B\cap A\left(  P,Q\right)
=C\right\}  \right\vert , \label{pf.lem.new4.sum}%
\end{align}
since the intersection $B\cap A\left(  P,Q\right)  $ is always a subset of
$A\left(  P,Q\right)  $.

Fix a subset $C\subseteq A(P,Q)$, and let
\[
Y_{k,Z}:=\left\{  B\in U_{k,Z}\ \mid\ S\left(  B\right)  =P\text{ and }B\cap
A\left(  P,Q\right)  =C\right\}  .
\]
We want to show that $\left\vert Y_{k,Z}\right\vert =\left\vert
X_{k-\left\vert C\right\vert ,Z}^{P,Q}\right\vert $. Define
\begin{align*}
\text{the map }\Phi:Y_{k,Z}  &  \rightarrow X_{k-\left\vert C\right\vert
,Z}^{P,Q}\\
\text{by }\Phi(B)  &  =B\setminus C
\end{align*}
and
\begin{align*}
\text{the map }\Psi:X_{k-\left\vert C\right\vert ,Z}^{P,Q}  &  \rightarrow
Y_{k,Z}\\
\text{by }\Psi(E)  &  =E\cup C.
\end{align*}
Let us first show that these maps are well-defined:

\begin{statement}
\textit{Claim 1.} The map $\Phi$ is well-defined. That is, $B\setminus C\in
X_{k-\left\vert C\right\vert ,Z}^{P,Q}$ for all $B\in Y_{k,Z}$.
\end{statement}

\begin{proof}
[Proof of Claim 1.]Let $B\in Y_{k,Z}$. Then $B\in U_{k,Z}$ as well as
$S\left(  B\right)  =P$ and $B\cap A\left(  P,Q\right)  =C$ (by the definition
of $Y_{k,Z}$). From $B\in U_{k,Z}$, we see that $B$ is a $k$-element subset of
$A$ satisfying $S\left(  B\right)  \cup T\left(  B\right)  =V$ and
$\operatorname*{Cycs}B=Z$ (by the definition of $U_{k,Z}$). In particular,
$\left\vert B\right\vert =k$.

We must show that $B\setminus C\in X_{k-\left\vert C\right\vert ,Z}^{P,Q}$. In
other words, we must show that $B\setminus C$ is a set in $\mathcal{P}%
_{k-\left\vert C\right\vert }\left(  A\right)  $ and satisfies $S\left(
B\setminus C\right)  =P$ and $T\left(  B\setminus C\right)  =Q$ and
$\operatorname*{Cycs}\left(  B\setminus C\right)  =Z$.

From $C=B\cap A\left(  P,Q\right)  \subseteq B$, we obtain
\[
\left\vert B\setminus C\right\vert =\left\vert B\right\vert -\left\vert
C\right\vert =k-\left\vert C\right\vert \ \ \ \ \ \ \ \ \ \ \left(
\text{since }\left\vert B\right\vert =k\right)  .
\]
Thus, $B\setminus C\in\mathcal{P}_{k-\left\vert C\right\vert }\left(
A\right)  $. Also, from $C\subseteq B$, we obtain $\left(  B\setminus
C\right)  \cup C=B$.

\begin{noncompile}
Note also that%
\[
B\setminus A\left(  P,Q\right)  =B\setminus\underbrace{\left(  B\cap
A(P,Q)\right)  }_{=C}=B\setminus C.
\]
Thus, the set $B\setminus C$ is disjoint from $A\left(  P,Q\right)  $ (since
$B\setminus A\left(  P,Q\right)  $ clearly is).

Next we recall that $S\left(  B\right)  =P$. Hence, Lemma \ref{lem.EAQP}
(applied to $E=B$) shows that $B\cap A\left(  Q,P\right)  =\varnothing$. In
other words, no arc in $B$ belongs to $A\left(  Q,P\right)  $. Hence, no arc
in $B\setminus C$ belongs to $A\left(  Q,P\right)  $ either (since $B\setminus
C\subseteq B$). Furthermore, no arc in $B\setminus C$ belongs to $A\left(
P,Q\right)  $ (since the set $B\setminus C$ is disjoint from $A\left(
P,Q\right)  $).
\end{noncompile}

Let us now prove that $S\left(  B\setminus C\right)  =P$. Indeed, Lemma
\ref{lem.new0} (applied to $E=B\setminus C$) yields that $S\left(  B\setminus
C\right)  =P$ if and only if $S\left(  \left(  B\setminus C\right)  \cup
C\right)  =P$. Since $S\left(  \left(  B\setminus C\right)  \cup C\right)  =P$
does hold (because $S\left(  \underbrace{\left(  B\setminus C\right)  \cup
C}_{=B}\right)  =S\left(  B\right)  =P$), we thus conclude that $S\left(
B\setminus C\right)  =P$.

Hence, Lemma \ref{lem.newC} (applied to $E=B\setminus C$) shows that
$\operatorname*{Cycs}\left(  \left(  B\setminus C\right)  \cup C\right)
=\operatorname*{Cycs}\left(  B\setminus C\right)  $ (since $C\subseteq
A\left(  P,Q\right)  $). Thus, $\operatorname*{Cycs}\left(  B\setminus
C\right)  =\operatorname*{Cycs}\underbrace{\left(  \left(  B\setminus
C\right)  \cup C\right)  }_{=B}=\operatorname*{Cycs}B=Z$.

Finally, Lemma \ref{lem.newD} shows that $T\left(  B\setminus C\right)  =Q$
(since $C=B\cap A\left(  P,Q\right)  $).

Altogether, we now have shown that $B\setminus C$ is a set in $\mathcal{P}%
_{k-\left\vert C\right\vert }\left(  A\right)  $ and satisfies $S\left(
B\setminus C\right)  =P$ and $T\left(  B\setminus C\right)  =Q$ and
$\operatorname*{Cycs}\left(  B\setminus C\right)  =Z$. In other words,
$B\setminus C\in X_{k-\left\vert C\right\vert ,Z}^{P,Q}$ (by the definition of
$X_{k-\left\vert C\right\vert ,Z}^{P,Q}$). Claim 1 is thus proved.
\end{proof}

\begin{statement}
\textit{Claim 2.} The map $\Psi$ is well-defined. That is, $E\cup C\in
Y_{k,Z}$ for all $E\in X_{k-\left\vert C\right\vert ,Z}^{P,Q}$.
\end{statement}

\begin{proof}
[Proof of Claim 2.]Let $E\in X_{k-\left\vert C\right\vert ,Z}^{P,Q}$. Then $E$
is a $\left(  k-\left\vert C\right\vert \right)  $-element subset of $A$
satisfying $S\left(  E\right)  =P$ and $T\left(  E\right)  =Q$ and
$\operatorname*{Cycs}E=Z$ (by the definition of $X_{k-\left\vert C\right\vert
,Z}^{P,Q}$). In particular, $\left\vert E\right\vert =k-\left\vert
C\right\vert $.

We must show that $E\cup C\in Y_{k,Z}$. In other words, we must show that
$E\cup C\in U_{k,Z}$, $S\left(  E\cup C\right)  =P$, and $\left(  E\cup
C\right)  \cap A\left(  P,Q\right)  =C$.

From $S\left(  E\right)  =P$, we immediately obtain $S\left(  E\cup C\right)
=P$ by Lemma \ref{lem.new0}.

Moreover, Lemma \ref{lem.EAPQ} yields $E\cap A\left(  P,Q\right)
=\varnothing$. Also, Lemma \ref{lem.EAQP} shows that $E\cap A\left(
Q,P\right)  =\varnothing$. Furthermore, Lemma \ref{lem.newC} yields
$\operatorname*{Cycs}\left(  E\cup C\right)  =\operatorname*{Cycs}E=Z$.

The set $A\left(  P,Q\right)  $ is disjoint from $A\left(  Q,P\right)  $,
since the source of an arc cannot belong to $P$ and $Q$ at the same time
(because of $V=P\sqcup Q$). Hence, the set $C$ (being a subset of $A\left(
P,Q\right)  $) must be disjoint from $A\left(  Q,P\right)  $ as well. In other
words, $C\cap A\left(  Q,P\right)  =\varnothing$.

Also, note that $C$ is a subset of $A\left(  P,Q\right)  $, and thus we have
\[
E\cap\underbrace{C}_{\subseteq A\left(  P,Q\right)  }\subseteq E\cap A\left(
P,Q\right)  =\varnothing.
\]
Hence,
\begin{equation}
E\cap C=\varnothing, \label{pf.lem.new4.c2.pf.empty}%
\end{equation}
so that%
\[
\left\vert E\cup C\right\vert =\left\vert E\right\vert +\left\vert
C\right\vert =k\ \ \ \ \ \ \ \ \ \ \left(  \text{since }\left\vert
E\right\vert =k-\left\vert C\right\vert \right)  .
\]
Therefore, $E\cup C\in\mathcal{P}_{k}\left(  A\right)  $.

Furthermore,%
\[
\left(  E\cup C\right)  \cap A\left(  P,Q\right)  =\underbrace{\left(  E\cap
A\left(  P,Q\right)  \right)  }_{=\varnothing}\cup\underbrace{\left(  C\cap
A\left(  P,Q\right)  \right)  }_{\substack{=C\\\text{(since }C\subseteq
A\left(  P,Q\right)  \text{)}}}=\varnothing\cup C=C.
\]

\begin{noncompile}
Also,%
\[
\left(  E\cup C\right)  \cap A\left(  Q,P\right)  =\underbrace{\left(  E\cap
A\left(  Q,P\right)  \right)  }_{=\varnothing}\cup\underbrace{\left(  C\cap
A\left(  Q,P\right)  \right)  }_{=\varnothing}=\varnothing\cup\varnothing
=\varnothing.
\]

\end{noncompile}

If $v$ is a vertex in $S\left(  E\right)  $, then $v$ can $E$-reach $s$ and
thus can $E\cup C$-reach $s$ as well (since $E\subseteq E\cup C$); thus it
belongs to $S\left(  E\cup C\right)  $. This shows that $S\left(  E\right)
\subseteq S\left(  E\cup C\right)  $. Similarly, $T\left(  E\right)  \subseteq
T\left(  E\cup C\right)  $. Now,
\[
V=P\sqcup Q=\underbrace{P}_{=S\left(  E\right)  \subseteq S\left(  E\cup
C\right)  }\cup\underbrace{Q}_{=T\left(  E\right)  \subseteq T\left(  E\cup
C\right)  }\subseteq S\left(  E\cup C\right)  \cup T\left(  E\cup C\right)  .
\]
Of course, this entails $S\left(  E\cup C\right)  \cup T\left(  E\cup
C\right)  =V$.

\begin{noncompile}
Each vertex in $P$ can $E\cup C$-reach $s$ (since it lies in $P=S\left(  E\cup
C\right)  $). Each vertex in $Q$ can $E$-reach $t$ (since it lies in
$Q=T\left(  E\right)  $) and thus can $E\cup C$-reach $t$ as well (since
$E\subseteq E\cup C$). Since each vertex in $V$ belongs to either $P$ or $Q$
(because $V=P\sqcup Q$), we thus conclude that each vertex in $V$ can $E\cup
C$-reach $s$ (if it belongs to $P$) or can $E\cup C$-reach $t$ (if it belongs
to $Q$). In other words, each vertex in $V$ belongs to $S\left(  E\cup
C\right)  $ or to $T\left(  E\cup C\right)  $. In other words, $V\subseteq
S\left(  E\cup C\right)  \cup T\left(  E\cup C\right)  $. Of course, this
entails $S\left(  E\cup C\right)  \cup T\left(  E\cup C\right)  =V$.
\end{noncompile}

This shows that $E\cup C\in U_{k,Z}$ (since $E\cup C\in\mathcal{P}_{k}\left(
A\right)  $ and $\operatorname*{Cycs}\left(  E\cup C\right)  =Z$).

So, we have shown that $E\cup C\in U_{k,Z}$, and $S\left(  E\cup C\right)
=P$, and $\left(  E\cup C\right)  \cap A\left(  P,Q\right)  =C$. All together,
this yields $E\cup C\in Y_{k,Z}$. Claim 2 is thus proved.
\end{proof}

\begin{statement}
\textit{Claim 3.} The maps $\Phi$ and $\Psi$ are mutually inverse.
\end{statement}

\begin{proof}
[Proof of Claim 3.]For each $B\in Y_{k,Z}$, we have $B\cap A\left(
P,Q\right)  =C$ and therefore $C=B\cap A\left(  P,Q\right)  \subseteq B$.
Thus, for each $B\in Y_{k,Z}$, we have%
\[
\Psi\left(  \Phi\left(  B\right)  \right)  =\Psi\left(  B\setminus C\right)
=\left(  B\setminus C\right)  \cup C=B\ \ \ \ \ \ \ \ \ \ \left(  \text{since
}C\subseteq B\right)  .
\]
In other words, $\Psi\circ\Phi=\operatorname*{id}$.

On the other hand, for each $E\in X_{k-\left\vert C\right\vert }^{P,Q}$, we
have $E\cap C=\varnothing$ (see the equality (\ref{pf.lem.new4.c2.pf.empty})
in the proof of Claim 2) and thus%
\[
\Phi\left(  \Psi\left(  E\right)  \right)  =\Phi\left(  E\cup C\right)
=\left(  E\cup C\right)  \setminus C=E\ \ \ \ \ \ \ \ \ \ \left(  \text{since
}E\cap C=\varnothing\right)  .
\]
This shows that $\Phi\circ\Psi=\operatorname*{id}$.

From $\Phi\circ\Psi=\operatorname*{id}$ and $\Psi\circ\Phi=\operatorname*{id}%
$, we conclude that the maps $\Phi$ and $\Psi$ are mutually inverse. This
proves Claim 3.
\end{proof}

Claim 3 shows that the map $\Psi$ is a bijection from $X_{k-\left\vert
C\right\vert ,Z}^{P,Q}$ to $Y_{k,Z}$. Thus,%
\[
\left\vert X_{k-\left\vert C\right\vert ,Z}^{P,Q}\right\vert =\left\vert
Y_{k,Z}\right\vert =\left\vert \left\{  B\in U_{k,Z}\ \mid\ S\left(  B\right)
=P\text{ and }B\cap A\left(  P,Q\right)  =C\right\}  \right\vert
\]
(by the definition of $Y_{k,Z}$).

We have proved this equality for each subset $C$ of $A\left(  P,Q\right)  $.
Summing it over all such subsets, we obtain%
\begin{align*}
\sum_{C\subseteq A\left(  P,Q\right)  }\left\vert X_{k-\left\vert C\right\vert
,Z}^{P,Q}\right\vert  &  =\sum_{C\subseteq A\left(  P,Q\right)  }\left\vert
\left\{  B\in U_{k,Z}\ \mid\ S\left(  B\right)  =P\text{ and }B\cap A\left(
P,Q\right)  =C\right\}  \right\vert \\
&  =\left\vert \left\{  B\in U_{k,Z}\ \mid\ S\left(  B\right)  =P\right\}
\right\vert \ \ \ \ \ \ \ \ \ \ \left(  \text{by (\ref{pf.lem.new4.sum}%
)}\right)  .
\end{align*}
Thus,%
\begin{align*}
\left\vert \left\{  B\in U_{k,Z}\ \mid\ S\left(  B\right)  =P\right\}
\right\vert  &  =\sum_{C\subseteq A\left(  P,Q\right)  }\left\vert
X_{k-\left\vert C\right\vert ,Z}^{P,Q}\right\vert =\sum_{m\in\mathbb{N}%
}\ \ \sum_{\substack{C\subseteq A\left(  P,Q\right)  ;\\\left\vert
C\right\vert =m}}\underbrace{\left\vert X_{k-\left\vert C\right\vert ,Z}%
^{P,Q}\right\vert }_{\substack{=\left\vert X_{k-m,Z}^{P,Q}\right\vert
\\\text{(since }\left\vert C\right\vert =m\text{)}}}\\
&  =\sum_{m\in\mathbb{N}}\underbrace{\sum_{\substack{C\subseteq A\left(
P,Q\right)  ;\\\left\vert C\right\vert =m}}\left\vert X_{k-m,Z}^{P,Q}%
\right\vert }_{\substack{=\dbinom{\left\vert A\left(  P,Q\right)  \right\vert
}{m}\cdot\left\vert X_{k-m,Z}^{P,Q}\right\vert \\\text{(since this is a sum of
}\dbinom{\left\vert A\left(  P,Q\right)  \right\vert }{m}\\\text{many equal
addends)}}}\\
&  =\sum_{m\in\mathbb{N}}\dbinom{\left\vert A\left(  P,Q\right)  \right\vert
}{m}\cdot\left\vert X_{k-m,Z}^{P,Q}\right\vert .
\end{align*}
This proves Lemma \ref{lem.new4}.
\end{proof}

\begin{lemma}
\label{lem.new5} Let $P$ and $Q$ be two nonempty subsets of $V$ such that
$V=P\sqcup Q$. Then,%
\[
\left\vert \left\{  B\in U_{k,Z}\ \mid\ T\left(  B\right)  =Q\right\}
\right\vert =\sum_{m\in\mathbb{N}}\dbinom{\left\vert A\left(  Q,P\right)
\right\vert }{m}\cdot\left\vert X_{k-m,Z}^{P,Q}\right\vert .
\]

\end{lemma}

\begin{proof}
Analogous to the proof of Lemma \ref{lem.new4}. (Just switch the roles of $s$
and $t$ and also the roles of $P$ and $Q$.)
\end{proof}

\begin{proof}
[Proof of Proposition \ref{prop.new3}.]Lemma \ref{lem.new4} yields%
\begin{align*}
\left\vert \left\{  B\in U_{k,Z}\ \mid\ S\left(  B\right)  =P\right\}
\right\vert  &  =\sum_{m\in\mathbb{N}}\dbinom{\left\vert A\left(  P,Q\right)
\right\vert }{m}\cdot\left\vert X_{k-m,Z}^{P,Q}\right\vert \\
&  =\sum_{m\in\mathbb{N}}\dbinom{\left\vert A\left(  Q,P\right)  \right\vert
}{m}\cdot\left\vert X_{k-m,Z}^{P,Q}\right\vert \\
&  \ \ \ \ \ \ \ \ \ \ \ \ \ \ \ \ \ \ \ \ \left(
\begin{array}
[c]{c}%
\text{since Proposition \ref{prop.A-symmetry}}\\
\text{yields }\left\vert A\left(  P,Q\right)  \right\vert =\left\vert A\left(
Q,P\right)  \right\vert
\end{array}
\right) \\
&  =\left\vert \left\{  B\in U_{k,Z}\ \mid\ T\left(  B\right)  =Q\right\}
\right\vert \ \ \ \ \ \ \ \ \ \ \left(  \text{by Lemma \ref{lem.new5}}\right)
.
\end{align*}
This proves Proposition \ref{prop.new3}.
\end{proof}

Now, Lemma \ref{lem.new1} yields%
\begin{align*}
\gamma_{k,Z}\left(  s\right)   &  =\left\vert U_{k,Z}\right\vert
-\sum_{\substack{P,Q\subseteq V\text{ are nonempty;}\\V=P\sqcup Q}%
}\underbrace{\left\vert \left\{  B\in U_{k,Z}\ \mid\ S\left(  B\right)
=P\right\}  \right\vert }_{\substack{=\left\vert \left\{  B\in U_{k,Z}%
\ \mid\ T\left(  B\right)  =Q\right\}  \right\vert \\\text{(by Proposition
\ref{prop.new3})}}}\\
&  =\left\vert U_{k,Z}\right\vert -\sum_{\substack{P,Q\subseteq V\text{ are
nonempty;}\\V=P\sqcup Q}}\left\vert \left\{  B\in U_{k,Z}\ \mid\ T\left(
B\right)  =Q\right\}  \right\vert =\gamma_{k,Z}\left(  t\right)
\end{align*}
(by Lemma \ref{lem.new2}). This completes the proof of Theorem \ref{thm.gen}.

As we already said, Theorem \ref{thm.balgamma} and Theorem \ref{thm.baldelta}
follow from Theorem \ref{thm.gen}.

\section{Further remarks}

\textbf{\quad1.} One might wonder whether our proof of Theorem \ref{thm.gen}
is, or can be made, bijective. In the given form, it is not, as it uses
subtraction twice: once in proving Proposition \ref{prop.A-symmetry} and once
again in the \textquotedblleft subtractive flip\textquotedblright\ that is
involved in Lemma \ref{lem.new1} (and Lemma \ref{lem.new2}). Both of these
instances of subtraction can be made bijective using the Garsia--Milne
involution principle \cite[\S 4.6]{StaWhi86}.

In the case of Lemma \ref{lem.new1}, only the simplest case of the involution
principle is needed: Let $\Gamma_{k,Z}\left(  s\right)  $ be the set of all
$s$-preconvergences $B$ of size $k$ that satisfy $\operatorname*{Cycs}B=Z$,
and let $\Gamma_{k,Z}\left(  t\right)  $ be the set of all $t$-preconvergences
$B$ of size $k$ that satisfy $\operatorname*{Cycs}B=Z$. Our above proof of
Proposition \ref{prop.new3} gives a bijection%
\[
\left\{  B\in U_{k,Z}\ \mid\ S\left(  B\right)  =P\right\}  \rightarrow
\left\{  B\in U_{k,Z}\ \mid\ T\left(  B\right)  =Q\right\}
\]
for every pair of nonempty subsets $P,Q\subseteq V$ satisfying $V=P\sqcup Q$
(essentially, this bijection sends each $B\in U_{k,Z}$ satisfying $S\left(
B\right)  =P$ to the set $\left(  B\setminus A\left(  P,Q\right)  \right)
\cup\rho_{P,Q}\left(  B\cap A\left(  P,Q\right)  \right)  $, where $\rho
_{P,Q}$ is a fixed size-preserving bijection from $A\left(  P,Q\right)  $ to
$A\left(  Q,P\right)  $). Combining these bijections for all such pairs
$\left(  P,Q\right)  $, we obtain a bijection $\phi:U_{k,Z}\setminus
\Gamma_{k,Z}\left(  s\right)  \rightarrow U_{k,Z}\setminus\Gamma_{k,Z}\left(
t\right)  $. Now we need to construct a bijection $\psi:\Gamma_{k,Z}\left(
s\right)  \rightarrow\Gamma_{k,Z}\left(  t\right)  $ from it. The involution
principle tells us that this $\psi$ acts on a given element $B\in\Gamma
_{k,Z}\left(  s\right)  $ by repeatedly applying $\phi^{-1}$ to it until it no
longer belongs to $U_{k,Z}\setminus\Gamma_{k,Z}\left(  t\right)  $ (which
means that it belongs to $\Gamma_{k,Z}\left(  t\right)  $). It is not hard to
see that this does not require more than $\left\vert V\right\vert $ many
iterations, since $\left\vert T\left(  B\right)  \right\vert $ increases with
each application of $\phi^{-1}$.

In the case of Proposition \ref{prop.A-symmetry}, however, we can also give a
direct bijective proof: By the directed Euler--Hierholzer theorem, each weak
component of the balanced digraph $D$ has an Eulerian circuit. Pick such a
circuit for each weak component of $D$. Note that on each of these circuits,
arcs from $A\left(  P,Q\right)  $ and arcs from $A\left(  Q,P\right)  $
alternate (if we remove the arcs from $A\left(  P,P\right)  $ and $A\left(
Q,Q\right)  $). Thus, we can define a bijection $A\left(  P,Q\right)
\rightarrow A\left(  Q,P\right)  $ that sends each arc $a\in A\left(
P,Q\right)  $ to the next arc $b\in A\left(  Q,P\right)  $ following it on the
chosen Eulerian circuit. At a second thought, this does not even require
Eulerian circuits; it suffices to pick any decomposition of $A$ into
(arc-disjoint) circuits.

\bigskip

\textbf{2.} We can also characterize $s$-convergences in terms of sinks.
Recall that a \emph{sink} of a digraph means a vertex with no outgoing arcs
(i.e., a vertex with outdegree $0$). Now our alternative characterization of
$s$-convergences is as follows:

\begin{proposition}
\label{prop.s-conv.2}Let $B$ be an acyclic subset of $A$. Let $s\in V$. Then,
$B$ is an $s$-convergence if and only if $s$ is the only sink of the
subdigraph $D\left\langle B\right\rangle $.
\end{proposition}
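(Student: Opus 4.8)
The plan is to prove both implications of the equivalence directly, using the fact that $D\left\langle B\right\rangle$ is acyclic together with the combinatorial meaning of a sink (outdegree $0$).

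For the ``only if'' direction, assume $B$ is an $s$-convergence. Then $s$ is a to-root of $D\left\langle B\right\rangle$. First I would show $s$ is a sink: if $s$ had an outgoing arc $a$ in $B$ with target $w$, then since $w$ can $B$-reach $s$ (as $s$ is a to-root), concatenating the arc $a$ with the path from $w$ to $s$ would produce a closed walk through $s$, hence a cycle in $D\left\langle B\right\rangle$, contradicting acyclicity. (One must be slightly careful: a closed walk of positive length in an acyclic digraph does contain a cycle, so this is fine.) Next I would show $s$ is the \emph{only} sink: let $v$ be any sink of $D\left\langle B\right\rangle$. Since $s$ is a to-root, there is a path $\mathbf{p}$ from $v$ to $s$. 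If $v\neq s$, this path has positive length, so its first arc is an arc of $B$ with source $v$, contradicting $\deg^{+}v=0$ in $D\left\langle B\right\rangle$. Hence $v=s$.

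For the ``if'' direction, assume $s$ is the only sink of $D\left\langle B\right\rangle$. I must show every vertex $v\in V$ can $B$-reach $s$. The idea is a standard ``walk until you get stuck'' argument: starting at $v$, repeatedly follow an outgoing $B$-arc; since $D\left\langle B\right\rangle$ is acyclic, this walk cannot revisit a vertex (a repeated vertex would close up a cycle), so it must terminate, and it can only terminate at a vertex with no outgoing $B$-arc, i.e., at a sink of $D\left\langle B\right\rangle$. The only sink is $s$, so the walk reaches $s$, giving a path (in fact walk, hence path) from $v$ to $s$. Since $v$ was arbitrary, $s$ is a to-root, and $B$ is acyclic by hypothesis, so $B$ is an $s$-convergence.

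The main obstacle is purely one of rigor rather than ideas: making the ``walk until stuck'' argument airtight. Concretely, I would phrase it as choosing a \emph{longest} path in $D\left\langle B\right\rangle$ starting at $v$ (such a longest path exists because $D\left\langle B\right\rangle$ is a finite acyclic digraph, so path lengths are bounded by $\left\vert V\right\vert -1$), and then observing that its terminal vertex must be a sink: otherwise we could extend the path by one more arc, and acyclicity guarantees the new vertex was not already on the path, contradicting maximality. This reduces the whole argument to two short, clean lemmas about acyclic digraphs (``a closed walk of positive length forces a cycle'' and ``a maximal path ends at a sink''), both of which are either cited from \cite{22s} or immediate. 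No new machinery beyond the definitions is needed.
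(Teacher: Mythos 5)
Your proposal is correct and follows essentially the same route as the paper: the forward direction argues exactly as the paper does (an outgoing arc at $s$ plus a return path would close a cycle; a sink $v\neq s$ could not start its required path to $s$), and the backward direction is the paper's ``take a longest path/walk from $v$; it must end at the unique sink $s$'' argument. The only cosmetic difference is that you maximize over paths directly while the paper maximizes over walks after noting that walks and paths coincide in an acyclic digraph.
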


\begin{proof}
$\Longrightarrow$: Assume that $B$ is an $s$-convergence. Thus, $s$ is a
to-root of $D\left\langle B\right\rangle $.

We shall first show that $s$ is a sink of $D\left\langle B\right\rangle $.
Indeed, assume the contrary; thus, there exists an arc $a$ of $D\left\langle
B\right\rangle $ with source $s$. Let $v$ be the target of this arc $a$. Since
$s$ is a to-root of $D\left\langle B\right\rangle $, there exists a path from
$v$ to $s$ in $D\left\langle B\right\rangle $. This path, together with the
arc $a$, creates a cycle\footnote{If $v=s$, then this cycle consists of just a
single loop.} in $D\left\langle B\right\rangle $, which contradicts the
acyclicity of $B$. So, we see that $s$ is a sink of $D\left\langle
B\right\rangle $.

Furthermore, no vertex $v\neq s$ of $D\left\langle B\right\rangle $ can also
be a sink of $D\left\langle B\right\rangle $, for $D\left\langle
B\right\rangle $ must have a path from $v$ to $s$ (because $s$ is a to-root of
$D\left\langle B\right\rangle $) and this path must begin with an arc with
source $v$. Hence, $s$ is the only sink of $D\left\langle B\right\rangle $.
\medskip

$\Longleftarrow$: Assume that $s$ is the only sink of $D\left\langle
B\right\rangle $. Let $v\in V$ be any vertex.

The digraph $D\left\langle B\right\rangle $ has no cycles (since $B$ is
acyclic). Thus, any walk of $D\left\langle B\right\rangle $ is a path (since a
walk that is not a path would contain a cycle). Consequently, the walks of
$D\left\langle B\right\rangle $ are precisely the paths of $D\left\langle
B\right\rangle $. Hence, the digraph $D\left\langle B\right\rangle $ has only
finitely many walks that start at $v$ (since it clearly has only finitely many
\textbf{paths} that start at $v$). Moreover, there is at least one such walk
(the length-$0$ walk $\left(  v\right)  $).

Thus, there is a longest such walk. Let $\mathbf{p}$ be such a walk. Then,
$\mathbf{p}$ must end at a sink of $D\left\langle B\right\rangle $ (since
otherwise, we could extend $\mathbf{p}$ by an additional arc at the end,
obtaining an even longer walk that starts at $v$). Since the only sink of
$D\left\langle B\right\rangle $ is $s$, this means that $\mathbf{p}$ must end
at $s$. Thus, $\mathbf{p}$ is a walk from $v$ to $s$, hence a path from $v$ to
$s$ (since any walk of $D\left\langle B\right\rangle $ is a path). Hence,
$D\left\langle B\right\rangle $ has a path from $v$ to $s$ (namely,
$\mathbf{p}$). Since $v$ was arbitrary, this shows that $s$ is a to-root of
$D\left\langle B\right\rangle $. Therefore, $B$ is an $s$-convergence (since
$B$ is acyclic).
\end{proof}

\bigskip

\textbf{3.} In Lemma \ref{lem.new1}, Lemma \ref{lem.new2}, Lemma
\ref{lem.new4}, Lemma \ref{lem.new5} and Proposition \ref{prop.s-conv.2},
there is no need for the digraph $D$ to be balanced.

\bigskip

\textbf{4.} The relationships between the above theorems and some results in
the literature prompt some natural questions:

\begin{enumerate}
\item[\textbf{(a)}] Let $G=\left(  V,E,\varphi\right)  $ be an undirected
multigraph, and let $D=G^{\operatorname*{bidir}}$. Then, in \cite[Proposition
5.3]{NoPoSt02}, Novik, Postnikov and Sturmfels show that the number
$\delta_{\left\vert E\right\vert }\left(  s\right)  $ for any $s\in V$ is the
M\"{o}bius invariant of the intersection lattice of the graphic arrangement of
$G$. Are there some similar interpretations for $\delta_{k}\left(  s\right)  $
for other $k$, and perhaps for the general case when $D\neq
G^{\operatorname*{bidir}}$ ?

\item[\textbf{(b)}] Again, let $G=\left(  V,E,\varphi\right)  $ be an
undirected multigraph. In \cite[Theorem 3.4]{Gioan07}, Gioan shows that the
number of indegree functions of orientations of $G$ that have a given vertex
$s$ as a to-root (or \textquotedblleft unique quasi-sink\textquotedblright\ in
his lingo) is independent of $s$, and equals the evaluation $t\left(
G;1,1\right)  $ of the Tutte polynomial of $G$. This is not quite our
$\delta_{\left\vert E\right\vert }\left(  s\right)  $, since we are counting
the orientations themselves rather than their indegree functions; but it
suggests that the Tutte polynomial is not far away. The Tutte polynomial can be
generalized to digraphs \cite{AwaBer18}; does the relationship generalize?
\end{enumerate}

\subsection*{Acknowledgments}

We thank Karla Leipold for her NORCOM 2025 talk, which gave
the initial motivation to this work, and
the anonymous referees for helpful remarks.

\end{document}